\algnewcommand{\Initialize}[1]{%
	\State \textbf{Initialization:}
	\Statex {\raggedright #1}
}
\newtheorem{assumption}{Assumption}
\newtheorem{theorem}{Theorem}
\newtheorem{lemma}{Lemma}
\newtheorem{proposition}{Proposition}
\newtheorem{definition}{Definition}
\newtheorem{corollary}{Corollary}
\theoremstyle{plain}
\newtheorem{remark}{Remark}
\definecolor{ao}{rgb}{0.0, 0.5, 0.0}
\newcommand{\afj}[1]{{\color{black}#1}} 
\newcommand{\za}[1]{{\color{black}#1}} 
\newcommand{\fy}[1]{{\color{black}#1}}
\newcommand{\fyy}[1]{{\color{black}#1}}
\newcommand{\aj}[1]{{\color{black}#1}}
\newcommand{\af}[1]{{\color{black}#1}}
\newcommand{\afz}[1]{{\color{black}#1}}
\newcommand{\ze}[1]{{\color{black}#1}}
\newcommand{\zal}[1]{{\color{black}#1}}
\newcommand{\zz}[1]{{\color{black}#1}}
\newcommand{\cmark}{\ding{51}}%
\newcommand{\xmark}{\ding{55}}%
\def\spose#1{\hbox to 0pt{#1\hss}}
\def\text #1{\hbox{\quad#1\quad}}
\def\nthinsp{\mskip -2   mu}
\def\superstar{^{\raise 0.5pt\hbox{$\nthinsp *$}}}
\def\SUPERSTAR{^{\raise 0.5pt\hbox{$*$}}}
\def\lamstarT {\lambda^{\raise 0.5pt\hbox{$\nthinsp *$}T}}
\def\hbar{\skew{4.2}\bar h}
		\def\bk1{{\rm 1\kern-.17em l}}
		\def\bkD{{\rm I\kern-.17em D}}
		\def\bkR{{\rm I\kern-.17em R}}
		\def\bkP{{\rm I\kern-.17em P}}
		\def\bkY{{\bf \kern-.17em Y}}
		\def\bkZ{{\bf \kern-.17em Z}}
		\def\beq{\begin{eqnarray}}
		\def\bc{\begin{center}}
		\def\be{\begin{enumerate}}
		\def\bi{\begin{itemize}}
		\def\bs{\begin{small}}
		\def\bS{\begin{slide}}
		\def\ec{\end{center}}
		\def\ee{\end{enumerate}}
		\def\ei{\end{itemize}}
		\def\es{\end{small}}
		\def\eS{\end{slide}}
		\def\eeq{\end{eqnarray}}
	\def\cp2problem#1#2#3#4{\fbox
		 {\begin{tabular*}{0.9\textwidth}
			{@{}l@{\extracolsep{\fill}}l@{\extracolsep{6pt}}l@{\extracolsep{\fill}}c@{}}
				#1 & & $#4 $
			\end{tabular*}}}
		\renewcommand{\emph}[1]{\textbf{#1}}
		\def\bk1{{\rm 1\kern-.17em l}}
		\def\bkD{{\rm I\kern-.17em D}}
		\def\bkR{{\rm I\kern-.17em R}}
		\def\bkP{{\rm I\kern-.17em P}}
		\def\bkZ{{\bf{Z}}}
\newcommand {\beeq}[1]{\begin{equation}\label{#1}}
\newcommand {\eeeq}{\end{equation}}
\newcommand {\bea}{\begin{eqnarray}}
\newcommand {\eea}{\end{eqnarray}}
\def\texitem#1{\par\smallskip\noindent\hangindent 25pt
               \hbox to 25pt {\hss #1 ~}\ignorespaces}
\title{\LARGE \bf Randomized Lagrangian Stochastic Approximation for Large-Scale Constrained Stochastic Nash Games}
\author{Zeinab Alizadeh$^{*}$% 
\and
 Afrooz Jalilzadeh\thanks{Department of Systems and Industrial Engineering,
        University of Arizona, Tucson, Arizona 85721, USA.
       {\tt\small zalizadeh@arizona.edu} and {\tt\small afrooz@arizona.edu}}% 
\and Farzad~Yousefian\thanks{Department of Industrial and Systems Engineering, Rutgers University, Piscataway, NJ 08854, USA. {\tt\small farzad.yousefian@rutgers.edu}}}
 \date{}
\begin{document}
%\scrollmode
\sloppy
\maketitle
\thispagestyle{empty}
\pagestyle{plain}

\maketitle
\begin{abstract}
In this paper, we consider stochastic monotone Nash games where each player's strategy set is characterized by possibly a large number of explicit convex constraint inequalities. Notably, the functional constraints of each player may depend on the strategies of other players, allowing for capturing a subclass of generalized Nash equilibrium problems (GNEP). While there is limited work that provide guarantees for this class of stochastic GNEPs, even when the functional constraints of the players are independent of each other, the majority of the existing methods rely on employing projected stochastic approximation (SA) methods. However, the projected SA methods perform poorly when the constraint set is afflicted by the presence of a large number of possibly nonlinear functional inequalities. Motivated by the absence of performance guarantees for computing the Nash equilibrium in constrained {stochastic monotone Nash} games, we develop a single timescale randomized Lagrangian multiplier stochastic approximation method where in the primal space, we employ an SA scheme, and in the dual space, we employ a randomized block-coordinate scheme where only a randomly selected Lagrangian multiplier is updated. We show that our method achieves a convergence rate of $\mathcal{O}\left(\frac{\log(k)}{\sqrt{k}}\right)$ for suitably defined suboptimality and infeasibility metrics in a mean sense.
\end{abstract}

\section{Introduction}\label{sec:intro}
{Noncooperative game theory provides a mathematical framework to study multi-agent decision making problems that have emerged in a wide range of applications including electricity markets~\cite{hu2007using}, transportation networks~\cite{ferris1997engineering}, and signal processing~\cite{deligiannis2017game}, among many others. While the multidisciplinary field of game theory finds its origin in the work by von Neumann and Morgenstern~\cite{von1947theory}, the notion of a Nash equilibrium (NE) was introduced and its existence was provably shown by John Nash~\cite{nash1951non}. Noncooperative Nash game is a modeling framework where a finite collection of selfish agents compete with each other and seek to optimize their own individual objectives. Such a competition is often subject to limited resources characterized by functional constraints. In this work, our primary focus lies in computing an NE for large-scale constrained Nash game formulations afflicted by the presence of uncertainty in the objectives of the agents. More precisely, we consider stochastic monotone Nash games with a large number of (possibly nonlinear) functional constraints described as follows. Let $N \geq 1$ denote the number of players. For all $i =1,\ldots, N$, the $i$th player is associated with the following constrained stochastic optimization problem.
 \begin{tcolorbox}
    \vspace{-0.1in}
\begin{align}\label{prob:snash_nlpc}  
\min_{x_{i}\in \mathcal{X}_i } \qquad & H_i(x)\triangleq\mathbb{E}[h_i(x_i,x_{-i},\xi)]  \tag{P$_i(x_{-i})$}\\
\hbox{where}\qquad &  \mathcal{X}_i\triangleq \left\{x_i \in X_i {\ \subseteq \mathbb{R}^{n_i}}\mid g_{i,\ell}(x_i{,x_{-i}}) \leq 0,\ \ \hbox{for all } \ell=1,\dots,J_i\right\}\ \   \notag
\end{align}
\end{tcolorbox}

\noindent where $x_i \in \mathbb{R}^{n_i}$ denotes the strategy of the $i$th player, $x_{-i} \in \mathbb{R}^{n-n_i}$ is the collection of the strategies of the other players, $n \triangleq \sum_{i=1}^N n_i$, $h_i:\mathbb{R}^{n}\times \mathbb{R}^d \to \mathbb{R}$ denotes the stochastic cost function associated with the $i$th player. The uncertainty in the game is characterized by the random variable $\xi: \Omega\to\mathbb{R}^d$ associated with the probability space $(\Omega, \mathcal{F},\mathbb{P})$. The constraint set of the $i$th player is expressed in terms of explicit convex constraint inequalities in terms of the jointly convex functions $g_{i,\ell}:\mathbb{R}^{n}\to \mathbb{R}$, for all $\ell = 1,\ldots,J_i$. The $i$th player's strategy is a subset of a nonempty convex set denoted by $X_i \subseteq \mathbb{R}^{n_i}$.  While we will provide the detailed description of our assumptions in subsequent sections, it is worth emphasizing that throughout, we assume that all the aforementioned functions are merely convex. 

Problem~\eqref{prob:snash_nlpc} is a subclass of the generalized Nash equilibrium problems (GNEP) that have been extensively employed in the literature in formulating applications arising in economics and operations research, among others~\cite{facchinei2010generalized,KRILASEVIC2023110931}. Recall that in GNEPs, players seek the NE by {\it simultaneously} satisfying the constraints. This is different from other classes of games where players make decisions in a specific order, e.g., in Stackelberg games. 

{Note that a} popular subclass of the problem~\eqref{prob:snash_nlpc} is the stochastic minimax problem. Consider the following stochastic merely-convex-merely-concave minimax optimization problem with possibly many functional constraints. 
\begin{tcolorbox}
    \vspace{-0.1in}
\begin{align}\label{prob:Sminimax_nlpc} % \tag{nlC-minimax}
\min_{u \in \mathcal{U}}  \max_{v \in   \mathcal{V}}\qquad & H(u,v)\triangleq\mathbb{E}[h(u,v,\xi)]  \\
\hbox{where}\qquad &  \mathcal{U}\triangleq \left\{u \in U \mid g_{1,\ell}(u) \leq 0,\ \ \hbox{for all } \ell=1,\dots,J_1\right\}\ \  \hbox{and}\ \  U \subseteq \mathbb{R}^{n_1},\notag \\
&\mathcal{V}\triangleq \left\{v \in V \mid g_{2,\ell}(v) \leq 0,\ \ \hbox{for all } \ell=1,\dots,J_2\right\}\ \  \hbox{and}\ \  V \subseteq \mathbb{R}^{n_2}.\notag
\end{align}
\end{tcolorbox}
%\noindent where the function $h(u,v,\xi):\mathbb{R}^{m_1}\times \mathbb{R}^{m_2}\times \mathbb{R}^d \to \mathbb{R}$ is merely convex in $u$, and merely concave in $v$, function $g_{j,1}$ is merely convex for all $j \in [J_1]$, function $g_{j,1}$ is merely convex for all $j \in [J_2]$, where we use $[J]$ to denote the set $\{1,\ldots,J\}$ for an integer $J$. The random variable $\xi: \Omega\to\mathbb{R}^d$ is associated with the probability space $(\Omega, \mathcal{F},\mathbb{P})$, the set $\mathcal{U} \subseteq \mathbb{R}^{m_1}$ is closed convex and is a Cartesian product of the sets $U_i \subseteq \mathbb{R}^{m_{i,1}}$ for $i \in [M_1]$, and similarly, the set $\mathcal{V} \subseteq \mathbb{R}^{m_2}$ is closed convex and is a Cartesian product of the sets $V_i \subseteq \mathbb{R}^{m_{i,2}}$ for $i \in [M_2]$.

Minimax optimization can indeed be viewed as a subclass of two-person zero-sum games. The existence of equilibrium in such a game is established by the celebrated von Neumann's minimax theorem in 1928 \cite{minimax28} that appears amongst the most fundamental results in game theory. The research on the development of gradient-type methods for solving minimax problems, also known as the problem of finding {\it saddle points}, dates back to as early as 1970s, including the work by Korpelevich~\cite{korp76} and Golshtein~\cite{golshtein1974generalized}, followed by efforts on on the development  gradient descent ascent as well as primal-dual methods (e.g., see ~\cite{chen97,Nem04,ned09,zhao2022accelerated,hamedani2021primal} and \cite[Chp. 1]{facchinei02finite}). More recently, minimax problems have drawn an increasing attention in areas including adversarial learning \cite{GAN14,NEURIPS2018_5a9d8bf5,sinha2018certifiable}, fairness in machine learning \zz{\cite{fairgan18,jin2022sharper}}, and distributionally robust federated learning \cite{distrobustfl20}, to name a few.

\noindent \textbf{Existing methods and research gap.} In addressing deterministic games, iterative methods for approximating an equilibrium find their origin in 1960s in the seminal work by Scarf~\cite{scarf1967approximation} (see~\cite[Chapter~12]{facchinei02finite} for a detailed review of deterministic methods). The prior algorithmic efforts in addressing stochastic Nash games, however, find their roots in the work by Jiang and Xu~\cite{jiangxu2008} in 2008, where a stochastic approximation (SA) method was developed for addressing stochastic variational inequality (VI) problems with strongly monotone and Lipschitzian mappings. Recall that given a set $\mathcal{X}$ and a single-valued mapping $F:\mathbb{R}^n \to \mathbb{R}^n$, vector $x \in X$ solves $\mbox{VI}(\mathcal{X},F)$ if $F(x)^T(y-x)\geq 0$ for all $y \in \mathcal{X}$. Under some mild convexity and differentiability assumptions, it can be shown that~\cite[Chapter~1]{facchinei02finite} the set of equilibria of the stochastic game~\eqref{prob:snash_nlpc}, for $i=1,\ldots,N$, is characterized by the solution set of $\mbox{VI}(\mathcal{X},F)$ where $\mathcal{X}\triangleq \prod_{i=1}^N \mathcal{X}_i$ and $F(x) \triangleq \left(\nabla_{x_1} \mathbb{E}[h_1(x,\xi)];\ldots;\nabla_{x_N} \mathbb{E}[h_N(x,\xi)] \right)$. In view of this result, seeking a Nash equilibrium of a stochastic game is equivalent to solving the aforementioned stochastic VI. The convergence and rate analysis of SA schemes for solving VIs under weaker monotonicity and smoothness assumptions were studied more recently in works including~\cite{KannanShanbhag2012,KoshalNedichShanbhag2013,FarzadMathProg17}. Also, stochastic extragradient methods and their variance-reduced variants were studied in~\zz{\cite{FarzadSetValued18,long2023fast,iusem2017extragradient}}. 
\begin{table}
\centering
\caption{Solution methods with rate statements for variational inequality problems}
\begin{tabular}{|c|c|c|c|c|c|}
\hline
Ref. &  Problem  &Rate  & Nonlinear const.\\ \hline
{Proximal Extra-Gradient}\cite{doi:10.1080/10556788.2017.1300899} & VI  &$\mathcal{O}(1/\epsilon)$ &\xmark  \\ \hline
 {SMP}\cite{juditsky2011solving} &SVI  &$\mathcal{O}(1/\epsilon^2)$&\xmark\\ \hline
{DS-SA} \cite{iusem2019variance}&SVI  &$\mathcal{O}(\log (1/\epsilon)/\epsilon^2)$&\xmark\\ \hline
 {RLSA} {(This paper)} &{SVI}   &{$\mathcal{O}(\log (1/\epsilon)/\epsilon^2)$} & {\cmark} \\ \hline
\end{tabular}

\label{tab:1}
\end{table}
Despite these advances, it is often assumed in the above-mentioned methods that the sets $\mathcal{X}_i$ is easy-to-project on and accordingly, the algorithmic framework in these works relies on projected schemes. However, in the following cases, $\mathcal{X}_i$ may become difficult-to-project on: (i) When the dimensionality of the solution space, i.e., $n$, is large; (ii) When the number of the constraints is large. For example, in the game setting $\sum_{i=1}^N J_i$ could be large; (iii) The constraint set may be characterized by nonlinear constraints. In fact, we are unaware of any iterative methods with provable complexity guarantees for the resolution of even deterministic variants of constrained monotone Nash games. Our research in this paper is precisely motivated by this shortcoming in the literature. 
\begin{table}
\centering
\caption{A subset of methods with guarantees for saddle point problems}
\begin{tabular}{|c|c|c|c|c|}
\hline
Ref. & Stoch.&Non-bilinear & Convex  & Nonlinear const.\\ \hline
PDHG \cite{chambolle2016ergodic},Acc-SP-HPE\cite{he2016accelerated} &\xmark & \xmark & $\mathcal{O}(1/\epsilon)$  &\xmark \\ \hline
%Yu \& Neely'17 & constrained opt. & $\mathcal{O}(1/k)$ & \xmark&\xmark \\ \hline
 Acc-HPE-type \cite{kolossoski2017accelerated},SMP\cite{juditsky2011solving} &\xmark & \cmark & $\mathcal{O}(1/\epsilon)$  &\xmark\\ \hline

Acc- BD\cite{he2015accelerating} & \cmark &\xmark & $\mathcal{O}(1/\epsilon)$  &\xmark\\ \hline
%Xu'20 & constrained opt. & $\mathcal{O}(1/k)$ & \xmark&\cmark \\ \hline
SAA\cite{nemirovski2009robust},SADMM\cite{zhao2019optimal} &\cmark&\cmark&$\mathcal{O}(1/ \epsilon^2)$&\xmark\\ \hline
{RLSA(This paper)} &\cmark& {\cmark} & {$\mathcal{O}(\log (1/\epsilon)/\epsilon^2)$} & {\cmark} \\ \hline
\end{tabular}

\label{tab:2}
\end{table}

\noindent \textbf{Main contributions.} In Table~\ref{tab:1} and Table~\ref{tab:2}, we provide a summary of the main results in our work and we compare them with some of the existing methods for addressing monotone VIs and minimax problems. To highlight our contributions, we first provide a brief review of some of the existing avenues for addressing monotone Nash games and VIs with explicit constraints. The duality theory for VIs and the notion of the dual VI has been studied by Mosco~\cite{mosco1972dual} in 1972 which was later improved in~\cite{gabay1983chapter,eckstein1999smooth}. Extending the duality framework devised in~\cite{auslender1976optimisation}, Auslender and Teboulle~\cite{AML_VI_2000_siopt} developed a Lagrangian duality scheme for solving multi-valued variational inequality problems with maximal monotone operators and explicit convex constraint inequalities. Leveraging entropic proximal terms, interior proximal point methods were developed for solving constrained VIs in works including~\cite{auslender1995interior,burachik1998generalized}. Although the aforementioned dual-based methods are endowed with asymptotic convergence guarantees, convergence speed of Lagrangian dual methods for solving constrained VIs is not known. In particular, we are interested in investigating whether it is possible to devise suitable Lagrangian dual methods that can be guaranteed with convergence speeds of similar order of magnitude to those of primal-dual methods developed for standard constrained optimization methods~\cite{yangyang_2020_PD_large_constraints}. We show that this is indeed possible. We summarize our main contributions in the following.

\noindent (i) {\it A single timescale randomized primal-dual stochastic approximation method.} Leveraging the primal-dual framework for addressing constrained stochastic optimization problems, we devise a randomized primal-dual stochastic approximation method for solving VIs with merely monotone and stochastic mappings with explicit constraint inequalities. To capture large-scale constrained stochastic Nash games, we employ a randomized block scheme for updating the Lagrange multipliers. Importantly, this scheme is single timescale and efficient to implement. 

\noindent (ii) {\it New convergence rate statements.} In contrast with standard optimization problems, one of the main challenges in addressing VIs lies in the lack of availability of suitable error metrics that rely on objective function values. In particular, this challenge introduces some difficulty in the convergence rate analysis of monotone VIs, an issue that is exacerbated in the presence of explicit constraint inequalities. Motivated by earlier efforts~\cite{FarzadMathProg17,KaushikYousefianSIOPT2021}, leveraging the notion of dual gap functions, we analyze the convergence of the proposed method and derive convergence rates of $\mathcal{O}\left(\frac{\log(k)}{\sqrt{k}}\right)$ for both suboptimality and infeasibility metrics in a mean sense.

%\noindent (iii) {\it Numerical validation on large-scale stochastic Nash Cournot games.}  We implement the proposed method for solving a stochastic variant of networked Nash Cournot games that are amongst the most popular constrained Nash games. We compare the performance of our scheme with that of standard projected stochastic approximation methods. Our preliminary results show that unlike the projected counterparts, our scheme scales well with the number of players, the number of the constraints associated with each player, and the dimensionality of the strategy set of each player. 

\noindent \textbf{Outline of the paper.} The remainder of the paper is organized as follows. In Section~\ref{sec:prelim}, we provide the main assumptions and review some preliminary results that are employed in the analysis. In Section~\ref{sec:alg}, we present the outline of the proposed algorithm along with some definitions. In Section~\ref{sec:rate} we establish convergence properties of the method and derive explicit performance guarantees. We present some concluding remarks in Section~\ref{sec:conc}. Lastly, Section~\ref{sec:app} includes the proofs for some of the results in the paper. 

%\noindent \textbf{Notation.}
}
\section{Preliminaries}\label{sec:prelim}
\fy{To address the stochastic game~\ref{prob:snash_nlpc} for $i \in [N]$, we consider the stochastic VI problem described as follows.
\begin{tcolorbox}
    \vspace{-0.2in}
\begin{align}\label{prob:SVI_nlp}\tag{\bf cSVI}
& \hbox{Find } x \in \mathcal{X} \quad \hbox{such that } \quad \mathbb{E}[F(x,\xi)]^T(y-x) \geq 0, \quad \hbox{for all } y \in \mathcal{X}\\
& \hbox{where}\qquad   \mathcal{X}\triangleq \left\{x \in X \mid f_j(x) \leq 0,\ \ \hbox{for all } j=1,\dots,J\right\}\ \  \hbox{and}\ \  X \triangleq \prod\nolimits_{i=1}^NX_i.\notag
\end{align}
\end{tcolorbox}
The details of our assumptions on the mapping $F$, functions $f_j$, and sets $X_i$ are provided as follows. 
\begin{assumption}[Problem properties]\label{assum:problem} \em {Consider problem \eqref{prob:SVI_nlp}.} Let the following holds.

\noindent \fy{(i)} Mapping $F(\bullet):\mathbb{R}^n \to \mathbb{R}^n$ is real-valued, continuous, and merely monotone on its domain, {i.e. $\langle F(x)-F(y),x-y\rangle\geq 0,$ for all $x,y\in X$.}

\noindent \fy{(ii)} Function $\fy{f_j (\bullet)}:\mathbb{R}^n \to \mathbb{R}$ is real-valued, merely convex on its domain for all $j=1,\ldots,J$. 

\noindent \fy{(iii)} Set $X \subseteq \mbox{int}\left(\mbox{dom}(F)\cap(\cap_{j=1}^J\mbox{dom}(f_j))\right)$ is nonempty, compact, and convex. 

\noindent \afz{(iv) The Slater condition holds, i.e., there exists $\hat x\in X$ such that $f_j(\hat x)<0$ for all $j=1,\hdots,J$.}
\end{assumption}
\begin{remark}\em 
Note that problem \eqref{prob:SVI_nlp} captures the stochastic game \ref{prob:snash_nlpc}. In fact, given the objective functions $h_i(\bullet, \xi)$ and constraint functions $g_{i,\ell}$ in \ref{prob:snash_nlpc}, $x$ is an NE if and only if $x$ solves \eqref{prob:SVI_nlp} where $f_j(x) \triangleq g_{i,\ell}(\fyy{x})$ where $j :=\ell+ \sum_{t=1}^{i-1}J_t$ for $\ell \in [J_i]$. 
\end{remark}
}

%\afz{Next, we define some notations and state our assumptions.}

\begin{definition}[Augmented-Lagrangian function]\label{def:lagrangian_function}
Given $x,y \in \mathbb{R}^n$, $\lambda \in \mathbb{R}^J$, and $\rho>0$, we define
\begin{align*}
&\mathcal{L}_\rho(x,y,\lambda) \triangleq F(y)^T(x-y)+\Phi_\rho(x,\lambda),\\
\hbox{where } \quad & \Phi_\rho(x,\lambda)\triangleq\tfrac{1}{J}\sum_{j=1}^J\phi_\rho(f_j(x),\aj{\lambda^{(j)}}) \ \ \hbox{and} \ \ 
 \phi_\rho(u,v) \triangleq   \left \{
  \begin{aligned}
    &uv+\tfrac{\rho}{2}u^2, && \text{if}\ \rho u +v \geq 0,\\
    &-\tfrac{v^2}{2\rho}, && \text{otherwise.}
  \end{aligned} \right.
\end{align*}

  \end{definition}

\afz{Similar to the traditional constrained optimization techniques, the nonlinear constrains in problem \eqref{prob:SVI_nlp} can be combined with the objective function using some multipliers. Using this technique we can characterize the optimality condition of problem \eqref{prob:SVI_nlp} in the following result.}

{\begin{proposition}[Karush--Kuhn--Tucker (KKT) conditions]\label{assum:kkt}\em 
\afz{Consider problem \eqref{prob:SVI_nlp} and suppose Assumption \ref{assum:problem} holds. Let $f(x)\triangleq (f_1(x),\ldots,f_J(x))^T$ and the gradient matrix $\nabla f(x) \triangleq (\nabla f_1(x),\ldots,\nabla f_J(x))^T \in \mathbb{R}^{n\times J}$. There exists $x^* \in \mathbb{R}^n$ and $\lambda^* \in \mathbb{R}^J$ satisfying the following KKT conditions:} 

\noindent (i)  $0 \in F(x^*) +J^{-1}\nabla f(x^*)^T\lambda^*+ \mathcal{N}_X(x^*)$.

\noindent (ii)  $0\leq \lambda^* \perp -f(x^*) \geq 0$.

\noindent (iii) $x^* \in X$.

\end{proposition}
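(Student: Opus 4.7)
The plan is to combine a standard existence result for the VI \eqref{prob:SVI_nlp} with the primal characterization of the normal cone of $\mathcal{X}$ under Slater's condition, and then rescale the multipliers to absorb the factor $J^{-1}$.

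First, I would establish existence of a solution $x^* \in \mathcal{X}$ to \eqref{prob:SVI_nlp}. Under Assumption \ref{assum:problem}, the set $\mathcal{X}$ is nonempty (by Slater), closed (continuity of each $f_j$), convex (intersection of convex sets), and compact (closed subset of the compact set $X$). Since the mapping $F$ is continuous and monotone on a neighborhood of $\mathcal{X}$, the Hartman--Stampacchia theorem yields an $x^* \in \mathcal{X}$ such that $F(x^*)^T(y-x^*) \geq 0$ for every $y \in \mathcal{X}$, where $F(x)\triangleq \mathbb{E}[F(x,\xi)]$. This in turn is equivalent to the normal cone inclusion $-F(x^*) \in \mathcal{N}_{\mathcal{X}}(x^*)$, which establishes (iii) and sets the stage for extracting multipliers.

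Next, I would apply the classical subdifferential/normal cone sum rule to decompose $\mathcal{N}_{\mathcal{X}}(x^*)$. Writing $\mathcal{X} = X \cap \bigcap_{j=1}^J \{x: f_j(x)\leq 0\}$ and invoking Slater's condition as the constraint qualification, convex analysis delivers the representation
\[
\mathcal{N}_{\mathcal{X}}(x^*) = \mathcal{N}_X(x^*) + \Big\{ \textstyle\sum_{j=1}^J \tilde\lambda_j \tilde\nabla f_j(x^*) \,\Big|\, \tilde\lambda_j \geq 0,\ \tilde\lambda_j f_j(x^*) = 0,\ \tilde\nabla f_j(x^*) \in \partial f_j(x^*) \Big\}.
\]
Combined with $-F(x^*) \in \mathcal{N}_{\mathcal{X}}(x^*)$, this yields nonnegative scalars $\tilde\lambda_j$ and subgradients $\tilde\nabla f_j(x^*)$ such that $0 \in F(x^*) + \sum_{j=1}^J \tilde\lambda_j \tilde\nabla f_j(x^*) + \mathcal{N}_X(x^*)$, together with the complementarity $\tilde\lambda_j f_j(x^*) = 0$ and primal feasibility $f_j(x^*) \leq 0$.

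Finally, I would set $\lambda_j^* \triangleq J\tilde\lambda_j$ for all $j\in[J]$, so that $J^{-1}\nabla f(x^*)^T\lambda^* = \sum_{j=1}^J \tilde\lambda_j \tilde\nabla f_j(x^*)$, delivering (i) in the stated form. Nonnegativity and complementarity of $\lambda_j^*$ with $-f(x^*) \geq 0$ then give (ii). The main obstacle is the normal cone decomposition: without Slater, one can only guarantee a Fritz John type condition (with a possibly zero multiplier attached to $F(x^*)$), and finite KKT multipliers may fail to exist. It is precisely Assumption \ref{assum:problem}(iv) that rules out abnormal multipliers and turns the inclusion $-F(x^*)\in \mathcal{N}_{\mathcal{X}}(x^*)$ into the desired KKT system.
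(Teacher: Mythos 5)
Your proof is correct and follows essentially the same route as the paper: the paper freezes the operator at a solution and applies the KKT conditions of the resulting convex program $\min_{y}\ y^TF(x^*)$ subject to $y\in X$, $f_j(y)\le 0$ under Slater's condition, which is exactly your normal-cone decomposition of $\mathcal{N}_{\mathcal{X}}(x^*)$ in different packaging. You are somewhat more careful than the paper in that you make explicit the existence of $x^*$ (via Hartman--Stampacchia on the compact convex set $\mathcal{X}$) and the rescaling $\lambda_j^*=J\tilde\lambda_j$ needed to produce the $J^{-1}$ factor, both of which the paper leaves implicit.
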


\begin{proof}
\afz{Note that any solution $x$ of \eqref{prob:SVI_nlp} is also a solution of the following optimization problem:
\begin{align}\label{opt_VI}
& \min_{y\in X }\ y^TF(x)\\
 \nonumber&\mbox{s.t.}\ f_j(y)\leq 0\quad \forall j.
 \end{align}
 Since the Slater condition holds, the first-order KKT condition for \eqref{opt_VI} implies that there exists 
% For a fixed $x\in \mathcal X$, problem \eqref{opt_VI} is a convex problem. KKT for \eqref{opt_VI} states that there exists 
$x^* \in \mathbb{R}^n$ and $\lambda^* \in \mathbb{R}^J$ satisfying conditions (i)-(iii).}
\end{proof}}
We will utilize the following definition in the convergence and rate analysis. 
\begin{definition}\label{def:gap}\em 
Consider the $\mbox{VI}(\mathcal{X},F)$ where $X$ is a closed convex set and $F$ is a real-valued {monotone} map. The dual gap function $\mbox{Gap}^*:\mathcal{X} \to \mathbb{R}\cup \{+\infty\}$ is defined for any $x \in \mathbb{R}^n$ as 
\begin{align}\label{eqn:gap}
\mbox{Gap}^*(x)\triangleq \sup_{y\in \mathcal{X}} F(y)^T(x-y).
\end{align}
\end{definition}
\begin{remark}\em 
Note that by the definition, $\mbox{Gap}^*(x) \geq 0$ for all $x \in \mathcal{X}$. Also, under some mild conditions, $\mbox{Gap}^*(x) = 0$ implies that $x$ is a solution to $\mbox{VI}(\mathcal{X},F)$. This is formally stated below.
\end{remark}
\aj{\begin{remark}\em 
Karamardian  \cite{karamardian1976existence} showed that under continuity and pseudomonotonicity of the
operator $F$, solving \eqref{prob:SVI_nlp} problem is equivalent to solving Minty stochastic variational inequality (MSVI) \cite{minty1962monotone} problem. Such a problem requires an $x^* \in X$ such that 
\begin{align}\tag{MSVI}
 (x^*-x)^TF(x)\leq 0, \qquad \hbox{for all } x\in \mathcal X. \end{align}
Therefore, to obtain the convergence rate we adopt the {dual} gap function. 
%\begin{align*}
%{\mbox{Gap}^*(\hat x)}\triangleq \sup_{x\in \mathcal X}\{F(x)^T(\hat x-x)\}, \quad {\hbox{for all }\hat{x} \in X.}
%\end{align*} 
Note that {$\mbox{Gap}^*(\bullet)$} is well-defined when $\mathcal X$ is a compact set, that follows from Assumption \ref{assum:problem}~(iii). 
\end{remark}}
\afj{By invoking Proposition \ref{assum:kkt} and Assumption \ref{assum:problem}, we can establish the following two results for problem \eqref{prob:SVI_nlp}. These results will be employed later to demonstrate the boundedness of dual iterates and to obtain convergence rate results. We have provided the proofs of the following lemmas in the appendix.}
%\begin{definition} \aj{No need!}[Function $\mathcal{E}$]\label{def:err_func} Let vectors $\bar x, x \in \mathbb{R}^n$ and $\lambda \in \mathbb{R}^J$ be given. We define 
%\begin{align*}
%\mathcal{E}(\hat x,x,\lambda) \triangleq F(\hat x) ^T(\hat x-x) +J^{-1}f(\hat x)^T\lambda.
%\end{align*}
%\end{definition}
{\begin{lemma}\label{lem:err_func}\em
Consider problem \eqref{prob:SVI_nlp} under Assumption \ref{assum:problem}. Then for any primal-dual solution pair $(x^*,\lambda^*),$ the following holds
{\begin{align*} 
F(x^*) ^T(x-x^*) +J^{-1}f(x)^T\lambda^*  \geq 0, \qquad \hbox{for all } x \in X.  
\end{align*}}
%In particular, $\mathcal{E}(x^*,x^*,\lambda^*) = 0$. 
\end{lemma}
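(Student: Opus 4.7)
The plan is to combine the first-order KKT information at $x^*$ with convexity of the $f_j$'s and complementary slackness. Proposition \ref{assum:kkt}(i) gives the existence of a vector $v\in\mathcal{N}_X(x^*)$ with $v=-F(x^*)-J^{-1}\nabla f(x^*)^T\lambda^*$. By definition of the normal cone, $v^T(x-x^*)\le 0$ for every $x\in X$, which I would rewrite as
$$F(x^*)^T(x-x^*)+J^{-1}\big(\nabla f(x^*)^T\lambda^*\big)^T(x-x^*)\ \ge\ 0,\qquad \forall x\in X.$$
This is the cleanest and most direct use of KKT(i); it reduces the claim to showing that the second term can be replaced by $J^{-1}f(x)^T\lambda^*$.

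The second step is to bridge the linear expression $\lambda^{*T}\nabla f(x^*)(x-x^*)$ and the nonlinear expression $\lambda^{*T}f(x)$. Here I invoke Assumption \ref{assum:problem}(ii), i.e.\ convexity of each $f_j$, to obtain
$$f_j(x)\ \ge\ f_j(x^*)+\nabla f_j(x^*)^T(x-x^*),\qquad j=1,\dots,J.$$
Using the nonnegativity of $\lambda^*$ from Proposition \ref{assum:kkt}(ii), I multiply by $\lambda^{*(j)}\ge 0$ and sum over $j$, which preserves the inequality. The complementary slackness part of KKT(ii), namely $\lambda^{*(j)}f_j(x^*)=0$ for every $j$, kills the $f_j(x^*)$ term, so I end up with
$$\sum_{j=1}^{J}\lambda^{*(j)}f_j(x)\ \ge\ \sum_{j=1}^{J}\lambda^{*(j)}\nabla f_j(x^*)^T(x-x^*)\ =\ \lambda^{*T}\nabla f(x^*)(x-x^*).$$
Dividing by $J$ gives $J^{-1}f(x)^T\lambda^*\ \ge\ J^{-1}\lambda^{*T}\nabla f(x^*)(x-x^*)$.

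Finally I combine the two inequalities: adding $F(x^*)^T(x-x^*)$ to both sides of the convexity bound and using the KKT/normal-cone inequality from the first step yields
$$F(x^*)^T(x-x^*)+J^{-1}f(x)^T\lambda^*\ \ge\ F(x^*)^T(x-x^*)+J^{-1}\lambda^{*T}\nabla f(x^*)(x-x^*)\ \ge\ 0$$
for every $x\in X$, which is the desired statement. There is no real obstacle here; the one place to be careful is sign bookkeeping around the normal-cone inclusion (making sure the inclusion is rewritten as an inequality in the correct direction for arbitrary $x\in X$, not only feasible $x\in\mathcal{X}$) and ensuring that complementary slackness is applied to eliminate $f_j(x^*)$ terms without requiring $f_j(x^*)=0$ for every $j$ individually. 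Since $x$ ranges over $X$ rather than $\mathcal{X}$, the right-hand side $J^{-1}f(x)^T\lambda^*$ is not a priori signed, but the argument above does not need it to be.
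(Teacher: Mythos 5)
Your proof is correct and follows essentially the same route as the paper: the normal-cone inequality from Proposition~\ref{assum:kkt}(i) (which the paper phrases as $x^*$ solving the augmented VI $\mbox{VI}(X,F+J^{-1}\nabla f^T\lambda^*)$), followed by the convexity bound on each $f_j$ weighted by $\lambda^{*(j)}\ge 0$ and complementary slackness to eliminate the $f_j(x^*)$ terms. No gaps; your remark about $x$ ranging over $X$ rather than $\mathcal{X}$ is exactly the right point of care and is handled correctly.
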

}
{\begin{lemma}\label{lem main}\em
Consider problem \eqref{prob:SVI_nlp}. Let Assumption \ref{assum:problem} holds. Assume that for any $x \in X$ and $\lambda \in \mathbb{R}_+^J$ we have
\begin{align}\label{ineq:decomp_lemma_assump_ineq}
 {F(x)^T}(\hat x-x) +J^{-1}f(\hat x)^T\lambda \leq \Phi_\rho(x,\hat \lambda)+{C(x,\lambda)},
\end{align}
where $\hat x \in X$ and $\hat \lambda \in \mathbb{R}_{+}^J$ are arbitrary vectors. Then for any primal-dual solution pair $(x^*,\lambda^*),$ the following holds.

\noindent (i) $J^{-1} \mathbf{1}^T[f(\hat x)]_+ \leq {C(x^*,\tilde \lambda)} $, where for all $j\in [J]$ we define $$\tilde \lambda_j\triangleq \begin{cases} 1+\lambda^*_j, &\text{if $f_j(\hat x)>0$,}\\ 0, & \text{otherwise.}\end{cases}$$

\noindent (ii) {$\sup_{x\in \mathcal X}\{F(x)^T(\hat x-x)\}\leq {\sup_{x\in \mathcal X}\{C(x,0)\}}$.}
\end{lemma}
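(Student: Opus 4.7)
\textbf{Proof plan for Lemma~\ref{lem main}.} The strategy is to exploit the assumed inequality \eqref{ineq:decomp_lemma_assump_ineq} by choosing, in each case, a particular pair $(x,\lambda)$ on the left-hand side that converts the expression into the quantity we want to bound, while simultaneously forcing the term $\Phi_\rho(x,\hat\lambda)$ to be non-positive so it can be dropped.

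\textbf{Key preliminary observation.} For any feasible $x\in\mathcal X$ and any $v\geq 0$, I would first show $\phi_\rho(f_j(x),v)\leq 0$. This follows directly from Definition~\ref{def:lagrangian_function}: writing $u=f_j(x)\leq 0$, in the regime $\rho u+v\geq 0$ we have $\phi_\rho(u,v)=uv+\tfrac{\rho}{2}u^2$, and rewriting $uv=u(v+\rho u)-\rho u^2$ gives $\phi_\rho(u,v)=u(v+\rho u)-\tfrac{\rho}{2}u^2\leq 0$ since $u\leq 0$ and $v+\rho u\geq 0$; in the opposite regime the formula is $-v^2/(2\rho)\leq 0$ automatically. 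Summing over $j$ yields $\Phi_\rho(x,\hat\lambda)\leq 0$ for every $x\in\mathcal X$ (in particular for $x^*$) and every $\hat\lambda\in\mathbb{R}_+^J$.

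\textbf{Part (ii).} I would plug $\lambda=0$ into \eqref{ineq:decomp_lemma_assump_ineq}, which eliminates the $J^{-1}f(\hat x)^T\lambda$ term and leaves $F(x)^T(\hat x-x)\leq \Phi_\rho(x,\hat\lambda)+C(x,0)$ for every $x\in X$. Restricting $x$ to $\mathcal X\subseteq X$ and using the preliminary observation to drop $\Phi_\rho(x,\hat\lambda)$, I then take the supremum over $x\in\mathcal X$ on both sides to obtain the claim.

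\textbf{Part (i).} Here I would plug $x=x^*$ and $\lambda=\tilde\lambda$ into \eqref{ineq:decomp_lemma_assump_ineq}. Since $x^*\in\mathcal X$, the preliminary observation again discards $\Phi_\rho(x^*,\hat\lambda)$, giving
\begin{align*}
F(x^*)^T(\hat x-x^*)+J^{-1}f(\hat x)^T\tilde\lambda\;\leq\;C(x^*,\tilde\lambda).
\end{align*}
Next I would invoke Lemma~\ref{lem:err_func} at the point $\hat x\in X$ to obtain $F(x^*)^T(\hat x-x^*)\geq -J^{-1}f(\hat x)^T\lambda^*$, and add this to the previous inequality to get $J^{-1}f(\hat x)^T(\tilde\lambda-\lambda^*)\leq C(x^*,\tilde\lambda)$. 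The final step is a componentwise check: when $f_j(\hat x)>0$, the definition gives $\tilde\lambda_j-\lambda^*_j=1$ so that $f_j(\hat x)(\tilde\lambda_j-\lambda^*_j)=[f_j(\hat x)]_+$; when $f_j(\hat x)\leq 0$, $\tilde\lambda_j=0$ so $f_j(\hat x)(\tilde\lambda_j-\lambda^*_j)=-f_j(\hat x)\lambda^*_j\geq 0=[f_j(\hat x)]_+$. Summing yields $\mathbf{1}^T[f(\hat x)]_+\leq f(\hat x)^T(\tilde\lambda-\lambda^*)$, which combined with the previous display gives the desired bound.

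\textbf{Main obstacle.} The only substantive step is the preliminary claim that $\Phi_\rho(x,\hat\lambda)\leq 0$ whenever $x\in\mathcal X$; everything else is a clean substitution and the componentwise bookkeeping on $\tilde\lambda-\lambda^*$, which is the standard device (going back to the analysis of augmented Lagrangians) for converting a Lagrangian-type inequality into a feasibility-gap bound without any \emph{a priori} bound on the dual iterates.
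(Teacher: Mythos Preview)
Your proposal is correct and follows essentially the same route as the paper: in both cases one shows $\Phi_\rho(x,\hat\lambda)\leq 0$ for feasible $x$, plugs in $(x^*,\tilde\lambda)$ for part~(i) and $(\cdot,0)$ for part~(ii), and combines with Lemma~\ref{lem:err_func}. The only cosmetic difference is that the paper first replaces $f(\hat x)^T\lambda^*$ by $[f(\hat x)]_+^T\lambda^*$ before the componentwise calculation, whereas you work directly with $f(\hat x)^T(\tilde\lambda-\lambda^*)$; both lead to the same bound.
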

}
\section{Algorithm outline}\label{sec:alg}
{The outline of the proposed method is presented by Algorithm~\ref{alg:primal_dual_SVI}. The sequence of the primal iterates is denoted by $\{x_k\}$ and the sequence of the dual iterates is denoted by $\{\lambda_k\}$. This is a single timescale Lagrangian stochastic approximation scheme that includes two main steps. At each iteration, in the dual step in equation \eqref{eqn:dual_step}, a randomly selected dual variable $\lambda^{(j)}$ is updated, while in the primal step in equation \eqref{eqn:primal_step}, the primal variables are updated. The stepsize sequence is denoted by $\{\gamma_k\}$ and the penalty sequence is denoted by $\{\rho_k\}$. In addition to the primal and dual variables that are updated at each iteration, both the stepsize and penalty parameter are updated iteratively. Our goal in this work lies in proving that Algorithm~\ref{alg:primal_dual_SVI} can be employed for solving the stochastic VI problem \eqref{prob:SVI_nlp} where the constraint set is characterized by explicit functional constraints. This result will be presented in the next section by Theorem~\ref{thm:rates} where we provide specific update rules for both $\gamma_k$ and $\rho_k$ such that the convergence of the proposed method can be guaranteed and non-asymptotic convergence rates can be derived. Before we proceed with the analysis of the method, we provide some definitions that will be utilized. 

\begin{remark}
Note that the Augmented Lagrangian function introduced in Definition~\ref{def:lagrangian_function} can be viewed as a relaxed variant of the following standard Augmented Lagrangian function of the form
\begin{align*}
\mathcal{L}_\rho(x,\lambda) & \triangleq \sup_{y \in \mathcal{X}}\mathcal{L}_\rho(x,y,\lambda) = sup_{y \in \mathcal{X}} \{F(y)^T(x-y)\}+\Phi_\rho(x,\lambda)\\
& = \mbox{Gap}^*(x) +\Phi_\rho(x,\lambda).
\end{align*}
Indeed, one of the key challenges in employing the Augmented Lagrangian function $\mathcal{L}_\rho(x,\lambda)$ is the presence of the supremum and nondifferentiability of the dual gap function. Further, even when the samples $F(\bullet,\xi_k)$ are unbiased, the standard Augmented Lagrangian function above may be biased, due to the presence of the supremum which again, renders an issue in utilizing this Augmented Lagrangian function. To circumvent these challenges, we employ the relaxed variant of the Augmented Lagrangian function introduced in Definition~\ref{def:lagrangian_function}. Importantly, as it will be shown in Theorem~\ref{thm:rates}, utilizing the relaxed variant of the Augmented Lagrangian function allows us to derive the rate statements. This is indeed a key novelty in the design of the proposed method in this work.  
     \end{remark}
Throughout, we let the history of the method be denoted by $\mathcal{F}_k\triangleq \cup_{t=0}^{k-1}\{\xi_t,j_t\}$ for any $k\geq 1 $, and $\mathcal{F}_0\triangleq \{\xi_0,j_0\}$. %Throughout the analysis we use the notation $g_{k}\triangleq \lambda_{k+1}^{(j_k)}\tilde \nabla f_{j_k}(x_k)$ for all $k\geq 0$. 
\begin{algorithm}[t]
	\caption{Randomized Lagrangian stochastic approximation method {(RLSA)}}\label{alg:primal_dual_SVI}
{\begin{algorithmic}[1] %%%%%%%%%%%%%%%%%%%%
	 \State \textbf{input}: Choose $x_0 \in X$, $\lambda_0 :=0_J$, and $\rho_0>0$ 
		\For {$k = 0,1, 2,\dots$}
		\State  Generate a random variable $j_k$ uniformly drawn from $\{1,\ldots,J\}$
	     \State Generate a random realization of $\xi$ denoted by $\xi_k$ and evaluate $F(x_k,\xi_k)$
		\State Update the dual variable $\lambda_k$ for all $j=1,\ldots,J$ as follows. 
		\begin{align}\label{eqn:dual_step}
		\lambda_{k+1}^{(j)}:=  \left \{
  \begin{aligned}
    &\left[\rho_k f_{j}(x_k)+\lambda_k^{(j)}\right]_+, && \text{if}\ j=j_k\\
    &\lambda_k^{(j)}, && \text{otherwise}
  \end{aligned} \right. 
		\end{align}
      \State Evaluate  $\tilde \nabla f_{j_k}(x_k) \in \partial f_{j_k}(x_k)$
		\State Update the primal variable $x_k$ as follows.
		\begin{align}\label{eqn:primal_step}
		x_{k+1}:=\Pi_X\left[x_k-\gamma_k\left(F(x_k,\xi_k)+\lambda_{k+1}^{(j_k)}\tilde \nabla f_{j_k}(x_k) \right)\right]
		\end{align}
	 \EndFor
	\end{algorithmic}}
\end{algorithm}

  {\begin{assumption}[Random samples]\label{assum:samples}\em Let the following holds.

\noindent (i) Samples $\xi_k$ are generated independently from the probability distribution of $\xi$ for $k\geq 0$.

\noindent (ii) Samples $j_k$, for $k\geq 0$, are generated independently from a uniform probability distribution such that $\mbox{Prob}(j_k=j)=J^{-1}$ for all $j=1,\ldots,J$.

\noindent (iii) Samples $\xi_k$ and $j_k$ are generated independently from each other. 

\noindent (iv)  $\mathbb{E}[F(x,\xi_k)-F(x)\mid x] =0$ for all $x \in X$ and all $k\geq 0$.

\noindent (v) There is some $\nu>0$ such that $\mathbb{E}[\|F(x,\xi_k)-F(x)\|^2\mid x] \leq \nu^2$ for all $x \in X$ and all $k\geq 0$.  
\end{assumption}}

{\begin{remark}\label{rem:bounds}\em
In view of Assumption \ref{assum:problem}, the subdifferential set $\partial f_j(x)$ is nonempty for all $x \in \mbox{int}(\mbox{dom}(f_j))$ and all $j=1,\ldots,J$. Also,  $f_j$ has bounded subgradients over $X$. Throughout, we let scalars $D_X$ and $D_f$ be defined as $D_X\triangleq \sup_{x \in X} \|x\|$ and $D_f\triangleq \max_{j \in [J]}\sup_{x \in X} |f_j(x)|$, respectively. Also, we let $C_F>0$ and $C_f>0$ be scalars such that $\|F(x)\|\leq C_F$ and $\|\tilde \nabla f_j(x)\|\leq C_f$ for all $\tilde \nabla f_j(x)\in \partial f_j(x)$, for all $x \in X$. 
\end{remark}}

\begin{definition}[Stochastic errors]\label{def:stoch_err}\em Let us define the following stochastic terms for $k\geq 0$.

%\noindent \fy{(i) $e_k\triangleq \tilde \nabla h(x_k,\xi_k)-\tilde \nabla h(x_k)$.}

\noindent {(i)} $w_k\triangleq F(x_k,\xi_k)-F(x_k)$.

\noindent {(ii)}  $\delta_k\triangleq \left[{\rho_k} f_{j_k}(x_k)+\lambda_k^{(j_k)} \right]_+\tilde \nabla f_{j_k}(x_k)-\tfrac{1}{J}\textstyle\sum_{j=1}^J	 \left[{\rho_k} f_{j}(x_k)+\lambda_k^{(j)} \right]_+\tilde \nabla f_{j}(x_k)$. 
\end{definition}}

\afz{In the next lemma, we show that the stochastic errors defined above are unbiased and have bounded variance. The proof is provided in the appendix.}

{\begin{lemma}[Properties of stochastic errors]\label{lem:prop_stoch_err}\em 
Consider Definition \ref{def:stoch_err}. Let Assumption \ref{assum:samples} holds. Then:

\noindent (i) ${ \mathbb{E}[w_k \mid \mathcal{F}_k]=0}$ {and $\mathbb{E}[\|w_k\|^2 \mid \mathcal{F}_k]\leq \nu^2$.}

\noindent (ii) $\mathbb{E}[\delta_k \mid \mathcal{F}_k]=0$ and $\mathbb{E}[\|\delta_k\|^2 \mid \mathcal{F}_k]\leq 2C_f^2\left(\rho_k^2D_f^2+\tfrac{\|\lambda_k\|^2}{J}\right).$
\end{lemma}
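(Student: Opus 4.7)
The plan is to handle parts (i) and (ii) separately, both as straightforward conditional-expectation calculations that exploit the independence and unbiasedness assumptions gathered in Assumption \ref{assum:samples}, together with the uniform bounds $D_f$ and $C_f$ from Remark \ref{rem:bounds}.

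For part (i), I first observe that $x_k$ is $\mathcal{F}_k$-measurable by construction of the algorithm (it is produced from $x_0$ and the samples $\{\xi_t,j_t\}_{t=0}^{k-1}$). Hence, conditional on $\mathcal{F}_k$, the new random realization $\xi_k$ is independent of $x_k$ and drawn from the distribution of $\xi$ (Assumption \ref{assum:samples}(i), (iii)). Writing $w_k=F(x_k,\xi_k)-F(x_k)$, I apply the tower property and condition on $x_k$, so that Assumption \ref{assum:samples}(iv) gives $\mathbb{E}[w_k\mid\mathcal{F}_k]=0$ and Assumption \ref{assum:samples}(v) gives $\mathbb{E}[\|w_k\|^2\mid\mathcal{F}_k]\leq \nu^2$. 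The same argument applies verbatim to $e_k$.

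For part (ii), I begin by noting that $j_k$ is independent of $\mathcal{F}_k$ and uniform on $\{1,\dots,J\}$. Denoting $Y_j\triangleq [\rho_kf_j(x_k)+\lambda_k^{(j)}]_+\,\tilde\nabla f_j(x_k)$, which is $\mathcal{F}_k$-measurable for each $j$, the first term in $\delta_k$ is $Y_{j_k}$ and the second term is exactly $\mathbb{E}[Y_{j_k}\mid\mathcal{F}_k]=J^{-1}\sum_{j=1}^JY_j$. This immediately yields $\mathbb{E}[\delta_k\mid\mathcal{F}_k]=0$. For the second moment bound, I use the identity $\mathbb{E}[\|Z-\mathbb{E}[Z]\|^2]\leq \mathbb{E}[\|Z\|^2]$ applied to $Z=Y_{j_k}$ conditionally on $\mathcal{F}_k$, so that
\begin{align*}
\mathbb{E}[\|\delta_k\|^2\mid\mathcal{F}_k]\leq \mathbb{E}[\|Y_{j_k}\|^2\mid\mathcal{F}_k]=\tfrac{1}{J}\sum_{j=1}^J[\rho_kf_j(x_k)+\lambda_k^{(j)}]_+^2\|\tilde\nabla f_j(x_k)\|^2.
\end{align*}

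The remaining step is an elementary inequality: I bound $\|\tilde\nabla f_j(x_k)\|\leq C_f$ by Remark \ref{rem:bounds}, then use $[a+b]_+^2\leq (a+b)^2+(a-b)^2= 2(a^2+b^2)$ with $a=\rho_kf_j(x_k)$ and $b=\lambda_k^{(j)}$, followed by $|f_j(x_k)|\leq D_f$, to conclude
\begin{align*}
\mathbb{E}[\|\delta_k\|^2\mid\mathcal{F}_k]\leq \tfrac{2C_f^2}{J}\sum_{j=1}^J\!\left(\rho_k^2f_j(x_k)^2+(\lambda_k^{(j)})^2\right)\leq 2C_f^2\!\left(\rho_k^2D_f^2+\tfrac{\|\lambda_k\|^2}{J}\right),
\end{align*}
which is the claimed bound. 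There is no real obstacle here; the only subtle point is recognizing that the ``$-J^{-1}\sum_j Y_j$'' term in the definition of $\delta_k$ is precisely the conditional mean of $Y_{j_k}$, which makes $\delta_k$ a centered random variable and lets the variance-by-second-moment inequality do the rest of the work.
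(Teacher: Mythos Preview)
Your proposal is correct and follows essentially the same approach as the paper: for part (ii) the paper likewise expands the conditional second moment, drops the nonnegative $\|\tfrac{1}{J}\sum_j Y_j\|^2$ term (which is exactly your variance-by-second-moment inequality), and then bounds each summand via $[\rho_k f_j(x_k)+\lambda_k^{(j)}]_+^2\leq 2(\rho_k^2 D_f^2+(\lambda_k^{(j)})^2)$ and $\|\tilde\nabla f_j(x_k)\|\leq C_f$. The only cosmetic difference is that the paper writes out the cross term explicitly before dropping it, whereas you invoke the centered-variable inequality directly.
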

}

\begin{remark}\label{rem:stoch_err_subg}\em
\fy{Note that we have} $\tfrac{1}{J}\textstyle\sum_{j=1}^J	 \left[\af{\rho_k} f_{j}(x_k)+\lambda_k^{(j)} \right]_+\tilde \nabla f_{j}(x_k) \in \aj{\partial_x} \Phi_\af{\rho_k}(x_k,\lambda_k)$.  Throughout,  we use the following notation
\begin{align*}
\aj{\tilde{\nabla}_x} \Phi_\af{\rho_k}(x_k,\lambda_k) \triangleq \tfrac{1}{J}\sum_{j=1}^J	 \left[\rho_k f_{j}(x_k)+\lambda_k^{(j)} \right]_+\tilde \nabla f_{j}(x_k).
\end{align*}
\afz{Therefore, one can conclude that }
\begin{align*}
\|\aj{\tilde{\nabla}_x} \Phi_\af{\rho_k}(x_k,\lambda_k)\|^2 \leq \tfrac{1}{J}\sum_{j=1}^J	\left\| \left[\af{\rho_k} f_{j}(x_k)+\lambda_k^{(j)} \right]_+\tilde \nabla f_{j}(x_k)\right\|^2\leq 2\af{\rho_k}^2D_f^2C_f^2+\tfrac{2C_f^2}{J}\|\lambda_k\|^2.
\end{align*}
\end{remark}

\section{Convergence and rate analysis}\label{sec:rate}
To obtain the main results of this paper, we use the following technical lemmas. \afj{All related proofs are provided in the appendix.} 

\begin{lemma}\label{lem:error}\em
Given \fy{an} arbitrary sequences $\{\sigma_k\}_{k\geq 0}\subset \mathbb R^n$ and $\{\tau_k\}_{k\geq 0}\subset \mathbb R^{++}$, let $\{v_k\}_{k\geq0}$ be a sequence such that $v_0\in \mathbb R^n$ and $v_{k+1}=v_k+\tau_k\sigma_k$. Then, for all $k\geq 0$ and $x\in \mathbb R^n$,
$$\zal{ \sigma^T_{k}(x-v_k)} \leq {1\over 2\tau_k}\|x-v_k\|^2-{1\over 2\tau_k}\|x-v_{k+1}\|+{\tau_k\over 2}\|\sigma_k\|^2.$$
\end{lemma}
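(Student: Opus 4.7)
The statement is a classical three-point identity for gradient-type updates, and the proof is a one-line algebraic expansion (I will assume the right-hand side should read $\|x-v_{k+1}\|^2$, since otherwise the inequality is not dimensionally consistent). My plan is to square out $\|x-v_{k+1}\|^2$ using the recursion $v_{k+1}=v_k+\tau_k\sigma_k$ and then solve for the inner product $\sigma_k^T(x-v_k)$.

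Concretely, I would start from
\begin{align*}
\|x-v_{k+1}\|^2 = \|x-v_k-\tau_k\sigma_k\|^2 = \|x-v_k\|^2 - 2\tau_k\,\sigma_k^T(x-v_k) + \tau_k^2\|\sigma_k\|^2.
\end{align*}
Rearranging for the cross term,
\begin{align*}
2\tau_k\,\sigma_k^T(x-v_k) = \|x-v_k\|^2 - \|x-v_{k+1}\|^2 + \tau_k^2\|\sigma_k\|^2,
\end{align*}
and then dividing both sides by $2\tau_k>0$ yields the claimed bound, in fact as an equality.

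There is no real obstacle here: the only subtle point is that the positivity assumption $\tau_k\in\mathbb{R}^{++}$ is needed to divide through, which is explicitly given in the hypothesis. I would note in passing that the result is stated as an inequality only because it will later be combined with one-sided bounds (e.g.\ replacing $\sigma_k$ with a stochastic surrogate and then taking conditional expectations), and that no convexity or monotonicity of $\sigma_k$ is used — the identity is purely algebraic and holds for any sequence in $\mathbb{R}^n$. This clean form is exactly what is needed downstream to telescope primal and dual iterates in the convergence analysis of Algorithm~\ref{alg:primal_dual_SVI}.
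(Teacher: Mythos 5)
Your proof is correct and is essentially the same computation as the paper's: both amount to expanding $\|x-v_{k+1}\|^2=\|x-v_k-\tau_k\sigma_k\|^2$ via the update rule (the paper just routes it through the three-point identity and a Young-type step, whereas you obtain the bound directly as an equality). You are also right that the right-hand side should read $\|x-v_{k+1}\|^2$; the missing square is a typo in the statement.
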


\aj{
\begin{lemma}\label{lem:bound_phi} \em{Consider Algorithm \ref{alg:primal_dual_SVI}.} Let $J_k^+=\{j\in[J]\mid \rho_k f_{j}(x_k)+\lambda_k^{(j)}\geq0\}$ and $J_k^-=[J] \backslash J_k^+$. Then, for any $\lambda { \in \mathbb{R}^J_{+}}$, the following holds:
\begin{align*}
&\quad -\Phi_{\rho_k}(x_k,\lambda_k)+{1\over J}\sum_{j=1}^J \lambda^{(j)}f_j(x_k)+{1\over 2\rho_k}\|\lambda_{k+1}-\lambda\|{^2}\\
&\leq {1\over 2\rho_k}\|\lambda_k-\lambda\|{^2}+\zal{ (\lambda_k-\lambda)^T (Je_{j_k}\odot \nabla_{\lambda} \Phi_{\rho_k}(x_k,\lambda_k)-\nabla_\lambda\Phi_{\rho_k}(x_k,\lambda_k))}+\Delta_k,
\end{align*}
where $\Delta_k{\triangleq}-\tfrac{1}{J}\sum_{j\in J_k^+}\tfrac{\rho_k}{2}(f_j(x_k))^2-\tfrac{1}{J}\sum_{j\in J_k^-}\tfrac{(\lambda_k^{(j)})^2}{2\rho_k}+\tfrac{1}{2\rho_k}\|\lambda_{k+1}-\lambda_k\|{^2}$.%\todo{Afrooz: I think the term $\tfrac{1}{2\rho}\|\lambda_{k+1}-\lambda_k\|{^2}$ should be dropped. Please check.}
\end{lemma}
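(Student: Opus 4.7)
\textbf{Proof plan for Lemma~\ref{lem:bound_phi}.}

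The plan is to recognize the randomized dual update as an exact stochastic gradient step on $\Phi_{\rho_k}(x_k,\cdot)$ and then apply the three-point identity from Lemma~\ref{lem:error}. The key preliminary observation is that, because of how $\phi_\rho$ is defined by two cases that glue together continuously at $\rho u + v = 0$, we have
\[
\nabla_\lambda \Phi_{\rho_k}(x_k,\lambda_k)^{(j)} \;=\; \tfrac{1}{J}\begin{cases} f_j(x_k), & j\in J_k^+,\\ -\lambda_k^{(j)}/\rho_k, & j\in J_k^-.\end{cases}
\]
Using this identity, a short case check on $j_k\in J_k^+$ and $j_k\in J_k^-$ shows that the update rule \eqref{eqn:dual_step} satisfies exactly $\lambda_{k+1} = \lambda_k + \rho_k\,\hat\sigma_k$, where $\hat\sigma_k \triangleq J\, e_{j_k}\odot \nabla_\lambda \Phi_{\rho_k}(x_k,\lambda_k)$: indeed, when $j_k\in J_k^-$ the positive part in \eqref{eqn:dual_step} produces $\lambda_{k+1}^{(j_k)}=0$, which coincides with the additive update $\lambda_k^{(j_k)} + \rho_k(-\lambda_k^{(j_k)}/\rho_k)$. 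Thus no projection argument is needed and Lemma~\ref{lem:error} applies as an equality.

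Applying Lemma~\ref{lem:error} with $v_k = \lambda_k$, $\tau_k = \rho_k$, $\sigma_k = \hat\sigma_k$, and $x = \lambda$ yields
\[
\tfrac{1}{2\rho_k}\|\lambda_{k+1}-\lambda\|^2 \;=\; \tfrac{1}{2\rho_k}\|\lambda_k-\lambda\|^2 + (\lambda_k-\lambda)^T\hat\sigma_k + \tfrac{1}{2\rho_k}\|\lambda_{k+1}-\lambda_k\|^2.
\]
I would then split $\hat\sigma_k = \nabla_\lambda\Phi_{\rho_k}(x_k,\lambda_k) + \bigl(\hat\sigma_k - \nabla_\lambda\Phi_{\rho_k}(x_k,\lambda_k)\bigr)$, which identifies the second piece with the noise term appearing in the statement, leaving the deterministic inner product $(\lambda_k-\lambda)^T\nabla_\lambda\Phi_{\rho_k}(x_k,\lambda_k)$ to be absorbed.

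At this point the claim reduces to the algebraic inequality
\[
-\Phi_{\rho_k}(x_k,\lambda_k) + \tfrac{1}{J}\sum_{j=1}^J \lambda^{(j)} f_j(x_k) + (\lambda_k-\lambda)^T\nabla_\lambda\Phi_{\rho_k}(x_k,\lambda_k) + \tfrac{1}{J}\sum_{j\in J_k^+}\tfrac{\rho_k}{2}f_j(x_k)^2 + \tfrac{1}{J}\sum_{j\in J_k^-}\tfrac{(\lambda_k^{(j)})^2}{2\rho_k} \;\leq\; 0.
\]
I would verify this by expanding $\Phi_{\rho_k}(x_k,\lambda_k)$ and $\nabla_\lambda\Phi_{\rho_k}(x_k,\lambda_k)$ separately on $J_k^+$ and $J_k^-$ using the piecewise form of $\phi_{\rho_k}$. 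The contributions on $J_k^+$ cancel exactly (the terms $\pm\frac{1}{J}f_j(x_k)\lambda_k^{(j)}$, $\pm\frac{\rho_k}{2J}f_j(x_k)^2$, and $\pm\frac{1}{J}f_j(x_k)\lambda^{(j)}$ collapse), and the contributions on $J_k^-$ collapse to
\[
\tfrac{1}{J\rho_k}\sum_{j\in J_k^-}\lambda^{(j)}\bigl(\rho_k f_j(x_k)+\lambda_k^{(j)}\bigr).
\]
By the very definition of $J_k^-$, the factor $\rho_k f_j(x_k)+\lambda_k^{(j)}$ is negative, and $\lambda^{(j)}\geq 0$ by assumption, so the sum is non-positive. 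The main obstacle I expect is precisely this bookkeeping step: keeping track of the right sign for each term in the two cases of $\phi_\rho$ and verifying the cancellations on $J_k^+$, since the signs interact subtly with the inclusion of the $\Delta_k$ quadratic correction. Once this is handled, substituting back into the three-point identity gives the stated inequality.
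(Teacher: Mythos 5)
Your proof is correct and follows essentially the same route as the paper's: both recognize the dual update as the exact additive step $\lambda_{k+1}=\lambda_k+J\rho_k\, e_{j_k}\odot\nabla_\lambda\Phi_{\rho_k}(x_k,\lambda_k)$, apply the polarization (three-point) identity, expand $\Phi_{\rho_k}$ and its gradient separately on $J_k^+$ and $J_k^-$, and close with the sign argument $\lambda^{(j)}(\rho_k f_j(x_k)+\lambda_k^{(j)})\le 0$ on $J_k^-$. The only cosmetic remark is that what you invoke is really the exact polarization identity rather than the inequality stated in Lemma~\ref{lem:error}, which is also what the paper does.
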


\begin{lemma}\label{lem:bound_product} \em
Suppose Assumption \ref{assum:problem} holds. Then, the following holds:
\begin{itemize}
\item[(a)] $\|Je_{j_k}\odot \nabla_{\lambda} \Phi_\aj{\rho_k}(x_k,\lambda_k)\|^2\leq D_f^2.$
\item[(b)]  Let $\afj{\bar\sigma_k=Je_{j_k}\odot \nabla_{\lambda} \Phi_\aj{\rho_k}(x_k,\lambda_k)-\nabla_\lambda\Phi_\aj{\rho_k}(x_k,\lambda_k)}$ and $\{\bar v_k\}_{k\geq0}$ be a sequence such that $\bar v_0\in \mathbb R^n$ and $\bar v_{k+1}=v_k+\bar \tau_k\bar\sigma_k$ for some $\{\bar \tau_k\}_{k\geq0}$. Then, the following holds.
\begin{align*}&(\lambda_k-\lambda)^T (Je_{j_k}\odot \nabla_{\lambda} \Phi_\aj{\rho_k}(x_k,\lambda_k)-\nabla_\lambda\Phi_\aj{\rho_k}(x_k,\lambda_k))\\&\quad \leq \zal{(\bar v_k-\lambda_k)^T \bar \sigma_k}+{1\over 2\bar\tau_k}\|\lambda-\bar v_k\|^2-{1\over 2\bar\tau_k}\|\lambda-\bar v_{k+1}\|{^2}+{\bar \tau_k \afj{\|\bar \sigma_k\|^2}\over 2}.\end{align*}
\end{itemize}
\end{lemma}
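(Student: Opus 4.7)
My plan is to treat the two parts independently: part (a) is a direct coordinate-wise computation, while part (b) is a near-immediate consequence of Lemma \ref{lem:error} after a standard splitting.

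For part (a), I would begin by writing $\nabla_\lambda \Phi_{\rho_k}(x_k,\lambda_k)$ in coordinates. From Definition \ref{def:lagrangian_function}, its $j$-th coordinate equals $\tfrac{1}{J}\,\partial_v \phi_{\rho_k}(f_j(x_k),\lambda_k^{(j)})$, and differentiating the two branches of $\phi_\rho$ gives $\tfrac{1}{J} f_j(x_k)$ when $\rho_k f_j(x_k) + \lambda_k^{(j)} \geq 0$ and $-\tfrac{\lambda_k^{(j)}}{J\rho_k}$ otherwise. The Hadamard product with $J e_{j_k}$ annihilates every coordinate other than $j_k$ and cancels the $1/J$ prefactor, so $\|J e_{j_k}\odot \nabla_\lambda \Phi_{\rho_k}(x_k,\lambda_k)\|^2$ reduces to either $(f_{j_k}(x_k))^2$ or $(\lambda_k^{(j_k)}/\rho_k)^2$, according to the sign of $\rho_k f_{j_k}(x_k)+\lambda_k^{(j_k)}$. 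In the first case Remark \ref{rem:bounds} immediately gives the bound $D_f^2$. In the second case, the projected dual update \eqref{eqn:dual_step} together with $\lambda_0=0_J$ ensures $\lambda_k^{(j)}\geq 0$ for every $k$ and $j$, so the branch condition forces $0\leq \lambda_k^{(j_k)}/\rho_k < -f_{j_k}(x_k) \leq D_f$, again yielding the claim.

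For part (b), I would use the splitting
\[(\lambda_k - \lambda)^T \bar\sigma_k \;=\; (\lambda_k - \bar v_k)^T \bar\sigma_k + (\bar v_k - \lambda)^T \bar\sigma_k,\]
keep the first piece as the inner-product term on the right-hand side, and handle the second piece by invoking Lemma \ref{lem:error} with the substitutions $x \leftarrow \lambda$, $\sigma_k \leftarrow \bar\sigma_k$, $\tau_k \leftarrow \bar\tau_k$, and $v_k \leftarrow \bar v_k$. Since the auxiliary iterate $\bar v_k$ is defined by precisely the ghost-iterate recursion $\bar v_{k+1} = \bar v_k + \bar\tau_k \bar\sigma_k$ prescribed by that lemma, the three-point prox bound $\tfrac{1}{2\bar\tau_k}\|\lambda - \bar v_k\|^2 - \tfrac{1}{2\bar\tau_k}\|\lambda - \bar v_{k+1}\|^2 + \tfrac{\bar\tau_k}{2}\|\bar\sigma_k\|^2$ applies with no additional work, and combining with the kept inner-product piece produces the stated inequality.

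The main thing to watch is sign-tracking in the splitting and in the direction of the inequality returned by Lemma \ref{lem:error}; beyond that, the argument is essentially mechanical. Conceptually, the point of introducing $\bar v_k$ is to separate a martingale-difference-type stochastic term (note that $\mathbb{E}[\bar\sigma_k\mid \mathcal{F}_k]=0$, since $\nabla_\lambda\Phi_{\rho_k}(x_k,\lambda_k)$ is $\mathcal{F}_k$-measurable and $\mathbb{E}[J e_{j_k}\mid \mathcal{F}_k] = \mathbf{1}$ by the uniform sampling in Assumption \ref{assum:samples}(ii), in the same vein as Lemma \ref{lem:prop_stoch_err}(ii)) from a telescoping prox term that is designed to absorb cleanly into the outer rate analysis. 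This ``ghost iterate'' device is the only nontrivial idea being deployed in part (b); neither part requires anything beyond the already-established structure of $\Phi_\rho$, the nonnegativity of the dual iterates, and Lemma \ref{lem:error}.
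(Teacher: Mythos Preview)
Your proposal is correct and follows essentially the same route as the paper. For part (a) the paper compresses your two-case computation into the single line $\|Je_{j_k}\odot\nabla_\lambda\Phi_{\rho_k}(x_k,\lambda_k)\|^2=\left|\max\!\left(-\tfrac{\lambda_k^{(j_k)}}{\rho_k},\,f_{j_k}(x_k)\right)\right|^2\leq D_f^2$, and for part (b) it performs exactly your add-and-subtract-$\bar v_k$ splitting and then applies Lemma~\ref{lem:error} to the $(\lambda-\bar v_k)^T\bar\sigma_k$ piece; your caution about sign-tracking is well placed, since the paper's own display in this step carries a sign slip that your careful bookkeeping would avoid.
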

}

\afz{Next, using Lemma \ref{lem:bound_phi} and \ref{lem:bound_product} we provide one-step analysis of our method by providing an upper bound on the reduction of the gap function in terms of the consecutive iterates.}

\begin{proposition}\label{prop:main_rec_ineq}\em
{Consider Algorithm \ref{alg:primal_dual_SVI}. Let Assumptions \ref{assum:problem} and \ref{assum:samples} hold. Then, for any $x \in X$ and $\lambda \in \mathbb{R}^J_{+}$ the following inequality holds.
\begin{align}\label{prop1}
\nonumber&(x_k-x)^TF(x)+J^{-1}f(x_k)^{T}\lambda- \Phi_\aj{\rho_k}(x,\lambda_k)\\&\nonumber
\quad\leq \tfrac{1}{2\gamma_k}\left(\|x_{k}-x\|^2-\|x_{k+1}-x\|^2\right)+\tfrac{1} {4\gamma_k}\left(\|x-v_k\|^2-\|x-v_{k+1}\|^2\right)\\
&\nonumber \qquad+\tfrac{1}{2\aj{\rho_k}}\left(\|\lambda_{k}-\lambda\|{^2}-\|\lambda_{k+1}-\lambda\|{^2}\right)+\tfrac{1}{2\bar\tau_k}\left(\|\lambda-\zz{\bar v_{k}}\|{^2}-\|\lambda-\zz{\bar v_{k+1}}\|{^2}\right)+2\gamma_kC_F^2\\
&\nonumber \qquad+4\gamma_kC_f^2\left(\aj{\rho_k^2}\ze{D_f^2}+\tfrac{1}{J}\|\lambda_k\|^2\right)+(v_k-x_k)^T(w_k+\delta_k)+2\gamma_k\|w_k+\delta_k\|^2\\&
\qquad + \zal{ (\bar v_k-\lambda_k)^T \bar \sigma_k} +\tfrac{\bar\tau_k\za{\|\bar \sigma_k\|^2}}{2}-\tfrac{1}{J}\sum_{j\in J_k^+}\tfrac{\aj{\rho_k}}{2}(f_j(x_k))^2-\tfrac{1}{J}\sum_{j\in J_k^-}\tfrac{(\lambda_k^{(j)})^2}{2\aj{\rho_k}},
%+\tfrac{1}{2\aj{\rho_k}}\|\lambda_{k+1}-\lambda_k\|{^2},
\end{align}
where $J^{+}_k$ and $J^-_{k}$ are given by Lemma \ref{lem:bound_phi}.}
\end{proposition}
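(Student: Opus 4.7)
My plan is to chain one-step inequalities for the primal and dual updates and then use the auxiliary sequences $v_k,\bar v_k$ to displace the two stochastic inner products against anchor points that make the noise contributions telescope.

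First, nonexpansiveness of $\Pi_X$ applied to the update \eqref{eqn:primal_step} gives, for any $x\in X$,
\[
d_k^T(x_k-x)\leq \tfrac{1}{2\gamma_k}\left(\|x_k-x\|^2-\|x_{k+1}-x\|^2\right)+\tfrac{\gamma_k}{2}\|d_k\|^2,
\]
where $d_k:=F(x_k,\xi_k)+\lambda_{k+1}^{(j_k)}\tilde\nabla f_{j_k}(x_k)$. Using Definition \ref{def:stoch_err} together with Remark \ref{rem:stoch_err_subg}, I would decompose $d_k=F(x_k)+\tilde\nabla_x\Phi_{\rho_k}(x_k,\lambda_k)+w_k+\delta_k$. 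Combining monotonicity $(x_k-x)^TF(x)\leq F(x_k)^T(x_k-x)$ with the subgradient inequality $\Phi_{\rho_k}(x,\lambda_k)\geq \Phi_{\rho_k}(x_k,\lambda_k)+\tilde\nabla_x\Phi_{\rho_k}(x_k,\lambda_k)^T(x-x_k)$ produces an exact cancellation of the $\tilde\nabla_x\Phi_{\rho_k}^T(x_k-x)$ contributions, leaving
\[
(x_k-x)^TF(x)-\Phi_{\rho_k}(x,\lambda_k)\leq \tfrac{1}{2\gamma_k}\left(\|x_k-x\|^2-\|x_{k+1}-x\|^2\right)+\tfrac{\gamma_k}{2}\|d_k\|^2-\Phi_{\rho_k}(x_k,\lambda_k)-(w_k+\delta_k)^T(x_k-x).
\]

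Second, I would bound $\tfrac{\gamma_k}{2}\|d_k\|^2\leq 2\gamma_k C_F^2+4\gamma_k C_f^2(\rho_k^2 D_f^2+\|\lambda_k\|^2/J)+\gamma_k\|w_k+\delta_k\|^2$ using $\|a+b+c\|^2\leq 4\|a\|^2+4\|b\|^2+2\|c\|^2$ together with Remarks \ref{rem:bounds} and \ref{rem:stoch_err_subg}. Next, I would split the error term as $-(w_k+\delta_k)^T(x_k-x)=(v_k-x_k)^T(w_k+\delta_k)+(w_k+\delta_k)^T(x-v_k)$ and apply Lemma \ref{lem:error} to the second piece, with $\tau_k=2\gamma_k$ and $\sigma_k=w_k+\delta_k$, producing the $v_k$-telescoping term $\tfrac{1}{4\gamma_k}(\|x-v_k\|^2-\|x-v_{k+1}\|^2)$ plus an extra $\gamma_k\|w_k+\delta_k\|^2$ that combines with the one from $\|d_k\|^2$ into $2\gamma_k\|w_k+\delta_k\|^2$. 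The remaining primal-dual coupling $-\Phi_{\rho_k}(x_k,\lambda_k)+J^{-1}f(x_k)^T\lambda$ is then processed via Lemma \ref{lem:bound_phi}, which yields the dual telescoping $\tfrac{1}{2\rho_k}(\|\lambda_k-\lambda\|^2-\|\lambda_{k+1}-\lambda\|^2)$, the block-sampling inner product $(\lambda_k-\lambda)^T\bar\sigma_k$, and the payload $\Delta_k$. I would finish by applying Lemma \ref{lem:bound_product}(b) to $(\lambda_k-\lambda)^T\bar\sigma_k$, which introduces the anchor $\bar v_k$, its telescoping term $\tfrac{1}{2\bar\tau_k}(\|\lambda-\bar v_k\|^2-\|\lambda-\bar v_{k+1}\|^2)$, the cross term $(\bar v_k-\lambda_k)^T\bar\sigma_k$, and the residual $\tfrac{\bar\tau_k\|\bar\sigma_k\|^2}{2}$. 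Summing all pieces produces \eqref{prop1}.

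The main obstacle is organizational rather than analytical: the step-size alignment $\tau_k=2\gamma_k$ must be chosen so that the two independent $\gamma_k\|w_k+\delta_k\|^2$ contributions (from $\tfrac{\gamma_k}{2}\|d_k\|^2$ and from Lemma \ref{lem:error}) sum exactly to $2\gamma_k\|w_k+\delta_k\|^2$, and one must confirm that the particular subgradient $\tilde\nabla_x\Phi_{\rho_k}(x_k,\lambda_k)$ identified in Remark \ref{rem:stoch_err_subg} is indeed a valid element of $\partial_x\Phi_{\rho_k}(x_k,\lambda_k)$, so that the convexity linearization cancels cleanly against the $d_k$ expansion. A secondary care concerns the nonnegative piece $\tfrac{1}{2\rho_k}\|\lambda_{k+1}-\lambda_k\|^2$ in $\Delta_k$: using the fact that only the $j_k$-th coordinate of $\lambda$ changes, one rewrites this term coordinatewise and absorbs it, leaving precisely the displayed negative residuals $-\tfrac{1}{J}\sum_{j\in J_k^+}\tfrac{\rho_k}{2}(f_j(x_k))^2-\tfrac{1}{J}\sum_{j\in J_k^-}\tfrac{(\lambda_k^{(j)})^2}{2\rho_k}$ that will be exploited in the subsequent rate analysis.
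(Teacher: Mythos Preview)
Your proposal is essentially correct and lands on the same inequality with the same constants, but the primal-side argument is organized differently from the paper's. The paper starts from the projection inequality in the form $(x_{k+1}-x)^T(x_{k+1}-x_k+\gamma_k d_k)\le 0$, uses the three-point identity $(x_{k+1}-x)^T(x_{k+1}-x_k)=\tfrac12(\|x_{k+1}-x\|^2-\|x_k-x\|^2+\|x_{k+1}-x_k\|^2)$, and applies Young's inequality separately to $(x_{k+1}-x_k)^TF(x_k)$ and $(x_{k+1}-x_k)^T\tilde\nabla_x\Phi_{\rho_k}$, retaining a negative $-\tfrac{1}{4\gamma_k}\|x_{k+1}-x_k\|^2$ that is later used to cancel the $\tfrac{1}{4\gamma_k}\|x_{k+1}-x_k\|^2$ arising from $(x_k-x_{k+1})^T(w_k+\delta_k)$. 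You instead use the coarser consequence $d_k^T(x_k-x)\le \tfrac{1}{2\gamma_k}(\|x_k-x\|^2-\|x_{k+1}-x\|^2)+\tfrac{\gamma_k}{2}\|d_k\|^2$ and bound $\|d_k\|^2$ all at once; because the noise then appears against $x_k$ rather than $x_{k+1}$, no $\|x_{k+1}-x_k\|^2$ bookkeeping is needed. This is a legitimate shortcut and delivers exactly the same coefficients; the paper's route is slightly sharper in principle (it keeps a spare negative $\|x_{k+1}-x_k\|^2$), but that slack is not used anywhere.

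One point in your last paragraph is not right. The positive piece $\tfrac{1}{2\rho_k}\|\lambda_{k+1}-\lambda_k\|^2$ in $\Delta_k$ cannot be ``absorbed'' pathwise into the two negative sums: since only coordinate $j_k$ moves, this term equals either $\tfrac{\rho_k}{2}f_{j_k}(x_k)^2$ or $\tfrac{(\lambda_k^{(j_k)})^2}{2\rho_k}$, which is of order one, whereas each negative sum carries a $1/J$ prefactor. What actually holds is $\mathbb{E}[\Delta_k\mid\mathcal{F}_k]=0$, and that is precisely how the paper exploits it downstream. In fact the paper's own proof stops at $+\Delta_k$ rather than the two negative sums displayed in \eqref{prop1}, and all subsequent uses (Lemma~\ref{prop:lambda_k_squared_bound} and the main theorem) work with $\Delta_k$ and its zero conditional mean. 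So your argument is fine once you replace the ``absorption'' claim by simply leaving $\Delta_k$ intact.
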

\begin{proof}
Let $x \in X$ and $\lambda \geq 0$ be arbitrary vectors. From \eqref{eqn:primal_step} we have 
\begin{align}\label{Primal state}
(x_{k+1}-x)^T\left(x_{k+1}-x_k+\gamma_k\left(F(x_k,\xi_k)+\left[\aj{\rho_k} f_{j_k}(x_k)+\lambda_k^{(j_k)} \right]_+\tilde \nabla f_{j_k}(x_k)\right)\right) \leq 0.
\end{align}
Using monotonicity of $F(\bullet)$ and Young's inequality, one can obtain
\begin{align}\label{F Monotone}
\nonumber &(x_{k+1}-x)^TF(x_k,\xi_k)\\
\nonumber&=(x_{k+1}-x_k)^TF(x_k)+(x_k-x)^TF(x_k)+(x_{k+1}-x)^Tw_k\\
\nonumber&\geq -\tfrac{1}{\aj{8}\gamma_k}\|x_{k+1}-x_k\|^2-\aj{2}\gamma_k\|F(x_k)\|^2+(x_k-x)^TF(x)+(x_{k+1}-x)^Tw_k\\
&\geq -\tfrac{1}{\aj{8}\gamma_k}\|x_{k+1}-x_k\|^2-\aj{2}\gamma_kC_F^2+(x_k-x)^TF(x)+(x_{k+1}-x)^Tw_k.
\end{align}
Similarly from Remark \ref{rem:stoch_err_subg} and convexity of $\Phi_\aj{\rho_k}(\bullet,\lambda_k)$, then we have
\begin{align}\label{convexity fee}
\nonumber&(x_{k+1}-x)^T\zz{\left[\rho_k f_{j_k}(x_k)+\lambda_k^{(j_k)} \right]_+\tilde \nabla f_{j_k}(x_k)}\\
\nonumber&=(x_{k+1}-x_k)^T\aj{\tilde{\nabla}_x} \Phi_\aj{\rho_k}(x_k,\lambda_k)+(x_k-x)^T\aj{\tilde{\nabla}_x} \Phi_\aj{\rho_k}(x_k,\lambda_k)+(x_{k+1}-x)^T\delta_k\\
\nonumber&\geq -\tfrac{1}{\aj{8}\gamma_k}\|x_{k+1}-x_k\|^2-\aj{2}\gamma_k\|\aj{\tilde{\nabla}_x} \Phi_\aj{\rho_k}(x_k,\lambda_k)\|^2+ \Phi_\aj{\rho_k}(x_k,\lambda_k)- \Phi_\aj{\rho_k}(x,\lambda_k)\\
\nonumber&\quad+(x_{k+1}-x)^T\delta_k\\
\nonumber&\geq -\tfrac{1}{\aj{8}\gamma_k}\|x_{k+1}-x_k\|^2-\aj{4}\gamma_kC_f^2\left(\aj{\rho_k^2}\ze{D_f^2}+\tfrac{1}{J}\|\lambda_k\|^2\right)+ \Phi_\aj{\rho_k}(x_k,\lambda_k)- \Phi_\aj{\rho_k}(x,\lambda_k)\\
&\quad+(x_{k+1}-x)^T\delta_k .
\end{align}
We can also write 
\begin{align}\label{Multiplication vector}
(x_{k+1}-x)^T(x_{k+1}-x_k)=\tfrac{1}{2}\left(\|x_{k+1}-x\|^2-\|x_{k}-x\|^2+\|x_{k+1}-x_k\|^2\right).
\end{align}
\za{Using \eqref{F Monotone},\eqref{convexity fee} and \eqref{Multiplication vector} in \eqref{Primal state}, we have}
\begin{align}\label{eq:b1}
\nonumber&(x_k-x)^TF(x)+\Phi_\aj{\rho_k}(x_k,\lambda_k)- \Phi_\aj{\rho_k}(x,\lambda_k) \\\nonumber& \quad\leq \tfrac{1}{2\gamma_k}\left(\|x_{k}-x\|^2-\|x_{k+1}-x\|^2\aj{-{1\over 2}\|x_{k+1}-x_k\|^2}\right)+\aj{2}\gamma_kC_F^2\\
&\qquad+\aj{4}\gamma_kC_f^2\left(\aj{\rho_k^2}\ze{D_f^2}+\tfrac{1}{J}\|\lambda_k\|^2\right)+\underbrace{(x-x_{k+1})^T(w_k+\delta_k)}_{\mbox{term (a)}}.
\end{align}
Now we obtain an upper bound for term (a) in \eqref{eq:b1}. 
\begin{align*}
\nonumber&(x-x_{k+1})^T(w_k+\delta_k)\\
\nonumber &\quad =(x-x_{k})^T(w_k+\delta_k)+(x_k-x_{k+1})^T(w_k+\delta_k)\\
& \quad\leq  (x-v_k)^T(w_k+\delta_k)+(v_k-x_k)^T(w_k+\delta_k)+{1\over 4\gamma_k}\|x_k-x_{k+1}\|^2+\gamma_k\|w_k+\delta_k\|^2.
\end{align*}
From Lemma \ref{lem:error} we have that $(x-v_k)^T(w_k+\delta_k)\leq {1\over 4\gamma_k}\|x-v_k\|^2-{1\over 4\gamma_k}\|x-v_{k+1}\|^2+\gamma_k\|w_k+\delta_k\|^2$, hence the above inequality can be written as 
\begin{align*}
(x-x_{k+1})^T(w_k+\delta_k)&\leq {1\over 4\gamma_k}\|x-v_k\|^2-{1\over 4\gamma_k}\|x-v_{k+1}\|^2+{1\over 4\gamma_k}\|x_k-x_{k+1}\|^\za{2}\\&+(v_k-x_k)^T(w_k+\delta_k)+2\gamma_k\|w_k+\delta_k\|^2.
\end{align*}
Using the above inequality in \eqref{eq:b1}, we get
\begin{align}\label{bb1}
\nonumber&(x_k-x)^TF(x)+ \Phi_\aj{\rho_k}(x_k,\lambda_k)- \Phi_\aj{\rho_k}(x,\lambda_k) \leq \tfrac{1}{2\gamma_k}\left(\|x_{k}-x\|^2-\|x_{k+1}-x\|^2\right)\\ \nonumber
&+2\gamma_kC_F^2+4\gamma_kC_f^2\left(\aj{\rho_k^2}\ze{D_f^2}+\tfrac{1}{J}\|\lambda_k\|^2\right)+\tfrac{1}{4\gamma_k}\left(\|x-v_k\|^2-\|x-v_{k+1}\|^2\right)\\&+(v_k-x_k)^T(w_k+\delta_k)+2\gamma_k\|w_k+\delta_k\|^2.
\end{align}
\aj{%For any feasible point $x$, one can easily verify that $\Phi_\aj{\rho_k}(x,\lambda_k)=0$. 
Using Lemmas \ref{lem:bound_phi} and \ref{lem:bound_product}, we can bound the left hand side of \eqref{bb1} from below and one can obtain the following. 
\begin{align*}
&(x_k-x)^TF(x)+{1\over J}\sum_{j=1}^J \lambda^{(j)}f_j(x_k)- \Phi_\aj{\rho_k}(x,\lambda_k)\\&
\quad\leq \tfrac{1}{2\gamma_k}\left(\|x_{k}-x\|^2-\|x_{k+1}-x\|^2\right)+\tfrac{1} {4\gamma_k}\left(\|x-v_k\|^2-\|x-v_{k+1}\|^2\right)\\&
\qquad +\tfrac{1}{2\aj{\rho_k}}\left(\|\lambda_{k}-\lambda\|{^2}-\|\lambda_{k+1}-\lambda\|{^2}\right)+\tfrac{1}{2\bar\tau_k}\left(\|\lambda-\zz{\bar v_{k}}\|{^2}-\|\lambda-\zz{\bar v_{k+1}}\|{^2}\right)+2\gamma_kC_F^2\\&
\qquad +4\gamma_kC_f^2\left(\aj{\rho_k^2}\ze{D_f^2}+\tfrac{1}{J}\|\lambda_k\|^2\right)+(v_k-x_k)^T(w_k+\delta_k)+2\gamma_k\|w_k+\delta_k\|^2\\& \qquad + \zal{ (\bar v_k-\lambda_k)^T \bar \sigma_k}+\tfrac{\bar\tau_k\afj{\|\bar \sigma_k\|^2}}{2}+\Delta_k,
\end{align*}
{where $\Delta_k$ is defined in Lemma \ref{lem:bound_phi}.}
}
\end{proof}

\afz{Now we show that the sequence of dual iterates generated by the proposed method is bounded.}

\begin{lemma}\label{prop:lambda_k_squared_bound}\em
\aj{Consider Algorithm \ref{alg:primal_dual_SVI}. Let Assumptions \ref{assum:problem} and \ref{assum:samples} hold. Let  $\rho_k=\tfrac{\rho}{\sqrt{(k+1)}\log(k+1)}$, $\gamma_k=\tfrac{\gamma}{\sqrt{(k+1)}\log(k+1)}$, $t_k=\bar\tau_k=\tfrac{1}{\sqrt{(k+1)}\log(k+1)}$ for any $k\geq 1$, where $\rho\gamma\leq \tfrac{1}{\ze{120}\rho\gamma C_f^2/J}$. Moreover, we define $\rho_0=\rho$, $\gamma_0=\gamma$ and $t_0=\bar \tau_0=1$. Then, there exists $B\geq0$ such that $\mathbb E[\|\lambda_{K}\|{^2}]\leq B$ for any $K\geq 0$.

}
\end{lemma}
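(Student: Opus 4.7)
The plan is to instantiate the one-step recursion of Proposition~\ref{prop:main_rec_ineq} at the KKT pair and the origin of the dual space, derive a Gronwall-type inequality for $\mathbb{E}[\|\lambda_{k+1}\|^2]$, and conclude using summability of the step-size products. First I set $x=x^*$ and $\lambda=0$. By Lemma~\ref{lem:err_func}, $(x_k-x^*)^T F(x^*)+J^{-1}f(x_k)^T\lambda^*\geq 0$, so $(x_k-x^*)^T F(x^*)\geq -J^{-1} f(x_k)^T\lambda^*\geq -D_f\|\lambda^*\|$ by Remark~\ref{rem:bounds}. Moreover, since $\lambda_k\geq 0$ (enforced by the projection in~\eqref{eqn:dual_step}) and $f_j(x^*)\leq 0$, a short case analysis on $\phi_{\rho_k}$ in Definition~\ref{def:lagrangian_function} gives $\Phi_{\rho_k}(x^*,\lambda_k)\leq 0$. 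Thus the left-hand side of Proposition~\ref{prop:main_rec_ineq} is bounded below by a constant independent of $k$.

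Next I multiply the resulting inequality by $2\rho_k$ so that the dual discrepancy isolates as $\|\lambda_{k+1}\|^2-\|\lambda_k\|^2$. Under the prescribed schedule, $\rho_k/\gamma_k=\rho/\gamma$ and $\rho_k/\bar\tau_k=\rho$ are both constant, so the primal, $v_k$-sequence, and $\bar v_k$-sequence drift terms become proper telescoping sums; choosing $v_0=x_0$ and $\bar v_0=0$ keeps their cumulative contributions bounded, using compactness of $X$. Taking conditional expectation, the cross terms $(v_k-x_k)^T(w_k+\delta_k)$ and $(\bar v_k-\lambda_k)^T\bar\sigma_k$ vanish since $v_k,\bar v_k$ are $\mathcal{F}_k$-measurable and the errors are conditionally unbiased by Lemma~\ref{lem:prop_stoch_err}. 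The squared-norm contributions $2\gamma_k\|w_k+\delta_k\|^2$ and $\bar\tau_k\|\bar\sigma_k\|^2/2$, combined with the explicit $(4\gamma_k C_f^2/J)\|\lambda_k\|^2$ term already present in Proposition~\ref{prop:main_rec_ineq}, produce a single coefficient $\alpha_k = O(\rho_k\gamma_k C_f^2/J) = O(1/((k+1)\log^2(k+1)))$ multiplying $\mathbb{E}[\|\lambda_k\|^2]$, which is summable, while the remaining deterministic terms give a summable nonnegative sequence $\beta_k$. The negative quadratic terms packaged in $\Delta_k$ of Lemma~\ref{lem:bound_phi} are dropped since they only help.

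Collecting and summing from $0$ to $K-1$ yields $\mathbb{E}[\|\lambda_K\|^2] \leq C_0 + \sum_{k=0}^{K-1}\alpha_k\,\mathbb{E}[\|\lambda_k\|^2]$ for a constant $C_0$ depending only on the problem data, $\rho$, $\gamma$, and $\|\lambda^*\|$. Applying the discrete Gronwall inequality gives $\mathbb{E}[\|\lambda_K\|^2]\leq C_0\prod_{k=0}^{K-1}(1+\alpha_k)\leq C_0\exp\!\big(\sum_{k=0}^{\infty}\alpha_k\big)=:B$, which is the desired uniform bound. The main obstacle I anticipate is the bookkeeping in the scaling step: after multiplication by $2\rho_k$, three distinct sources contribute $\|\lambda_k\|^2$-dependent terms -- the Lagrangian-gradient bound from Remark~\ref{rem:stoch_err_subg}, the variance of $\delta_k$ from Lemma~\ref{lem:prop_stoch_err}, and the variance of $\bar\sigma_k$ -- and they must all be absorbed into a single explicit $\alpha_k$ small enough for the infinite product to admit a clean absolute constant. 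The stated parameter condition $\rho\gamma\leq\sqrt{J/(120 C_f^2)}$ is precisely what secures this, and the barely-summable $(k+1)^{-1}\log^{-2}(k+1)$ tail -- a direct consequence of the $\log(k+1)$ factor in the step-size denominator -- is the reason such a logarithmic correction appears in the step-size design.
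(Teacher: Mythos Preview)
There is a genuine gap in your setup. You instantiate Proposition~\ref{prop:main_rec_ineq} at $\lambda=0$, which yields only a constant lower bound $(x_k-x^*)^T F(x^*)\geq -c$ with $c=J^{-1}D_f\|\lambda^*\|_1$ (or similar). After you multiply by $2\rho_k$ and rearrange, this lower bound becomes the additive term $+2\rho_k c$ on the right-hand side. When you sum from $0$ to $K-1$, the contribution is $2c\sum_{k=0}^{K-1}\rho_k$, and with $\rho_k=\rho/(\sqrt{k+1}\log(k+1))$ this partial sum diverges like $\sqrt{K}/\log K$. Hence your ``constant'' $C_0$ is not a constant at all; it grows without bound, and the Gronwall step only yields $\mathbb{E}[\|\lambda_K\|^2]=O(\sqrt{K}/\log K)$, not a uniform bound. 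The step-size product $\rho_k\gamma_k$ is summable, but $\rho_k$ alone is not, and that is exactly what multiplies the leftover constant.

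The paper avoids this by taking $\lambda=\lambda^*$ rather than $\lambda=0$. With that choice the full left-hand side $(x_k-x^*)^T F(x^*)+J^{-1}f(x_k)^T\lambda^*-\Phi_{\rho_k}(x^*,\lambda_k)$ is nonnegative directly from Lemma~\ref{lem:err_func} together with $\Phi_{\rho_k}(x^*,\lambda_k)\leq 0$, so no residual constant survives the scaling. The telescoping then occurs in $\|\lambda_k-\lambda^*\|^2$, and one recovers $\|\lambda_K\|^2$ via $\|\lambda_K\|^2\leq 2\|\lambda_K-\lambda^*\|^2+2\|\lambda^*\|^2$. If you make exactly this change, the remainder of your Gronwall route does go through (and is a legitimate alternative to the paper's induction on $K$): the only $\|\lambda_k\|^2$-dependent contributions come from the $4\gamma_k C_f^2 J^{-1}\|\lambda_k\|^2$ term and from $\mathbb{E}[\|\delta_k\|^2]$ inside $\|w_k+\delta_k\|^2$, both of which yield a coefficient $\alpha_k=O(\rho_k\gamma_k C_f^2/J)$ that is summable. (Incidentally, $\mathbb{E}[\|\bar\sigma_k\|^2]\leq D_f^2$ does not depend on $\|\lambda_k\|$, so your list of ``three sources'' overcounts by one, though this only helps.)
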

\begin{proof} \aj{From Lemma \ref{lem:err_func} we have $(x_k-x^*)^TF(x^*)+J^{-1}f(x_k)^{T}\lambda^* \geq 0$. Also, since $f_j(x^*)\leq 0$ for all $j \in J$, we have $\Phi_\rho(x^*,\lambda_k) \leq 0$. In view of these relations, from Proposition \ref{prop:main_rec_ineq}, for $x:=x^*$ and $\lambda:=\lambda^*$ we obtain
\begin{align*}
&
0 \quad \leq \tfrac{1}{2\gamma_k}\left(\|x_{k}-x^*\|^2-\|x_{k+1}-x^*\|^2\right)+\tfrac{1} {4\gamma_k}\left(\|x^*-v_k\|^2-\|x^*-v_{k+1}\|^2\right)\\&
\qquad+\tfrac{1}{2\aj{\rho_k}}\left(\|\lambda_{k}-\lambda^*\|{^2}-\|\lambda_{k+1}-\lambda^*\|{^2}\right)+\tfrac{1}{2\bar\tau_k}\left(\|\lambda^*-\zz{\bar v_{k}}\|{^2}-\|\lambda^*-\zz{\bar v_{k+1}}\|{^2}\right)+2\gamma_kC_F^2\\&
\qquad+4\gamma_kC_f^2\left(\aj{\rho^2_k}\ze{D_f^2}+\tfrac{1}{J}\|\lambda_k\|^2\right) +(v_k-x_k)^T(w_k+\delta_k)  +2\gamma_k\|w_k+\delta_k\|^2 \\&
\qquad + \zal{(\bar v_k-\lambda_k)^T \bar \sigma_k}+\tfrac{\bar\tau_k\afj{\|\bar \sigma_k\|^2}}{2}-\tfrac{1}{J}\sum_{j\in J_k^+}\tfrac{\rho_k}{2}(f_j(x_k))^2-\tfrac{1}{J}\sum_{j\in J_k^-}\tfrac{(\lambda_k^{(j)})^2}{2\rho_k}.
%+\tfrac{1}{2\rho}\|\lambda_{k+1}-\lambda_k\|{^2}.
\end{align*}
\aj{Multiplying both sides by $t_k$ and  \zz{using the fact that} $\tfrac{t_k}{\rho_k}\geq \tfrac{t_{k+1}}{\rho_{k+1}}$, $\tfrac{t_k}{\gamma_k}\geq \tfrac{t_{k+1}}{\gamma_{k+1}}$, $t_k\geq t_{k+1}$, $\rho_k\geq \rho_{k+1}$, \zz{$\tfrac{t_k}{\bar\tau_k}\geq \tfrac{t_{k+1}}{\bar\tau_{k+1}}$}}, summing over \aj{$k=0,\ldots,T$, where $T\leq K$, and from $\|\lambda_{T+1}\|{^2} \leq 2\|\lambda_{T+1}-\lambda^*\|{^2}+2\|\lambda^*\|{^2}$ we obtain the following relation.}}
{\begin{align*}
&\zz{\tfrac{t_{T+1}}{4\rho_{T+1}}}\|\lambda_{T+1}\|{^2}\\& \quad \leq \tfrac{t_0}{2\gamma_0}\|x_{0}-x^*\|^2+\tfrac{t_0} {4\gamma_0}\|x^*-v_0\|^2+\tfrac{\zz{1}}{2\rho_0}\|\lambda_{0}-\lambda^*\|{^2}+\tfrac{t_0}{2\bar\tau_0}\|\lambda^*-\zz{\bar v_{0}}\|{^2}+\zz{\tfrac{t_{T+1}}{2\rho_{T+1}}}\|\lambda^*\|{^2}\\&
\qquad+\sum_{k=0}^Tt_k\gamma_k\left(2C_F^2+4C_f^2\left(\rho_k^2\ze{D_f^2}+\tfrac{1}{J}\|\lambda_k\|^2\right)\right)+\sum_{k=0}^Tt_k(v_k-x_k)^T(w_k+\delta_k)\\&
\qquad +2\sum_{k=0}^Tt_k\gamma_k\|w_k+\delta_k\|^2+ \sum_{k=0}^Tt_k\zal{ (\bar v_k-\lambda_k)^T \bar \sigma_k} +\sum_{k=0}^Tt_k\tfrac{\bar\tau_k\afj{\|\bar \sigma_k\|^2}}{2}-\sum_{k=0}^Tt_k\Delta_k.%+\sum_{k=0}^t\tfrac{1}{2\rho}\|\lambda_{k+1}-\lambda_k\|{^2}
\end{align*}
Taking expectation on the both sides and using Assumption \ref{assum:samples}(iv-v), \afj{Lemma \ref{lem:prop_stoch_err}} and the fact that $\mathbb E[\zal{ (\bar v_k-\lambda_k)^T \bar \sigma_k}]=\mathbb E[\Delta_k]=0$, we get  
\begin{align*}
&\tfrac{t_{T+1}}{4\rho_{T+1}}\mathbb E[\|\lambda_{T+1}\|{^2}]\\& \quad \leq \tfrac{t_0}{2\gamma_0}\|x_{0}-x^*\|^2+\tfrac{t_0} {4\gamma_0}\|x^*-v_0\|^2+\tfrac{1}{2\rho_0}\|\lambda_{0}-\lambda^*\|{^2}+\tfrac{t_0}{2\bar\tau_0}\|\lambda^*-\zz{\bar v_{0}}\|{^2}+\tfrac{t_{T+1}}{2\rho_{T+1}}\|\lambda^*\|{^2}\\&
\qquad+\sum_{k=0}^Tt_k\gamma_k\left(2C_F^2+4C_f^2\left(\rho_k^2\ze{D_f^2}+\tfrac{1}{J}\mathbb E[\|\lambda_k\|^2]\right)\right)\\&
\qquad +\zz{2}\sum_{k=0}^Tt_k\gamma_k(2\nu^2+\ze{4}C_f^2\rho_k^2D_f^2+\ze{4}C_f^2\mathbb E[\tfrac{\|\lambda_k\|^2}{J}])+\sum_{k=0}^Tt_k\tfrac{\bar\tau_kD_f^2}{2},%+\sum_{k=0}^t\tfrac{1}{2\rho}\|\lambda_{k+1}-\lambda_k\|{^2}
\end{align*}
\afj{where we used part (a) of Lemma \eqref{lem:bound_product} and the definition of $\bar \sigma_k$, i.e., $\mathbb E[\|\bar \sigma_k\|^2]\leq {D_f^2}$}. Define $A_1= \tfrac{t_0}{2\gamma_0}\|x_{0}-x^*\|^2+\tfrac{t_0} {4\gamma_0}\|x^*-v_0\|^2+\tfrac{1}{2\rho_0}\|\lambda_{0}-\lambda^*\|{^2}+\tfrac{t_0}{2\bar\tau_0}\|\lambda^*-\zz{\bar v_{0}}\|{^2}$, $A_2=2C_F^2+4C_f^2\rho^2\ze{D_f^2}$, $A_3=2\nu^2+\ze{4}C_f^2\rho^2D_f^2$, then the above inequality can be written as follows, where we used the fact that $\rho_k=\tfrac{\rho}{(k+1)\log(k+1)}\leq \rho$.
\begin{align}\label{bound simple}
\nonumber\tfrac{t_{T+1}}{4\rho_{T+1}}\mathbb E[\|\lambda_{T+1}\|{^2}]& \leq A_1+\tfrac{t_{T+1}}{2\rho_{T+1}}\|\lambda^*\|{^2}+\sum_{k=0}^Tt_k\gamma_k\left(A_2+4C_f^2\tfrac{1}{J}\mathbb E[\|\lambda_k\|^2]\right)\\&
\quad +2\sum_{k=0}^Tt_k\gamma_k(A_3+\ze{\tfrac{4}{J}}C_f^2\mathbb E[\|\lambda_k\|^2])+\sum_{k=0}^Tt_k\tfrac{\bar\tau_kD_f^2}{2}.
\end{align}
Letting $T=-1$, one can easily show that $\mathbb E[\|\lambda_0\|^2]\leq B$. Now suppose $\mathbb E[\|\lambda_{T+1}\|^2]\leq B$ holds for all $T\in\{-1,0,\hdots,K-2\}$. We show that $\mathbb E[\|\lambda_{T+1}\|^2]\leq B$ for $T=K-1$. Multiplying both sides of \eqref{bound simple} by $\tfrac{4\rho_{T+1}}{t_{T+1}}$ and letting $T=K-1$, we get  
\begin{align*}
\mathbb E[\|\lambda_{K}\|{^2}]& \leq \tfrac{4\rho_{K}}{t_{K}}A_1+2\|\lambda^*\|{^2}+\tfrac{4\rho_{K}}{t_{K}}\sum_{k=0}^{K-1}t_k\gamma_k\left(A_2+4C_f^2B\tfrac{1}{J}\right)\\&
\quad +\tfrac{8\rho_{K}}{t_{K}}\sum_{k=0}^{K-1}t_k\gamma_k(A_3+\ze{\tfrac{4}{J}}C_f^2B)+\tfrac{4\rho_{K}}{t_{K}}\sum_{k=0}^{K-1}t_k\tfrac{\bar\tau_kD_f^2}{2}.
\end{align*}
From the fact that $\rho_k=\tfrac{\rho}{\sqrt{(k+1)}\log(k+1)}$, $\gamma_k=\tfrac{\gamma}{\sqrt{(k+1)}\log(k+1)}$, $t_k=\bar\tau_k=\tfrac{1}{\sqrt{(k+1)}\log(k+1)}$, one can show that $\tfrac{\rho_k}{t_k}=\rho$ and $\sum_{k=0}^{K-1}t_k\gamma_k\leq 3\gamma$. Therefore, we obtain 
%\zali{Zeinab:  Here if we consider k start from 0 then we will have log(1) which is 1/0 and it doesn't converge, so to solve this problem should we consider $\log(k+1+\delta)$? or it may change our result?  I am not sure whether I am in the right way or not but I considered $\log(k+2)$ instead of $\log(k+1)$:   }
\begin{align*}
\mathbb E[\|\lambda_{K}\|{^2}]& \leq 4\rho A_1+2\|\lambda^*\|{^2}+12\rho\gamma\left(A_2+4C_f^2B\tfrac{1}{J}\right)+24\rho\gamma(A_3+\ze{\tfrac{4}{J}}C_f^2B)+12\rho\tfrac{D_f^2}{2}\leq B,
\end{align*}
where in the last inequality we used the fact that $B=\max\left\{\|\lambda_0\|^2,\tfrac{4\rho A_1+2\|\lambda^*\|^2+12\rho\gamma A_2+24\rho\gamma A_3+12\rho D_f^2}{1-144\rho\gamma C_f^2/J}\right\}$ and $\rho\gamma\leq \tfrac{1}{144\rho\gamma C_f^2/J}$.
}\end{proof}

Now we are ready to state the convergence rates of Algorithm \ref{alg:primal_dual_SVI}.

\begin{theorem}[Convergence rate statements for Algorithm~\ref{alg:primal_dual_SVI}]\label{thm:rates}\em
Consider Algorithm \ref{alg:primal_dual_SVI}. Let Assumptions \ref{assum:problem} and \ref{assum:samples} hold. Let  $\rho_k=\tfrac{\rho}{\sqrt{(k+1)}\log(k+1)}$, $\gamma_k=\tfrac{\gamma}{\sqrt{(k+1)}\log(k+1)}$, $t_k=\bar\tau_k=\tfrac{1}{\sqrt{(k+1)}\log(k+1)}$ for all $k\geq 1$, where $\rho\gamma\leq \tfrac{1}{144\rho\gamma C_f^2/J}$. Moreover, we define $\rho_0=\rho$, $\gamma_0=\gamma$ and $t_0=\bar \tau_0=1$. Let us define $\bar x_K\triangleq \tfrac{\sum_{k=0}^K t_kx_k}{\sum_{k=0}^K t_k}$ for $K\geq 0$. Then, for any $K\geq 0$, we have \begin{align*}
   & \mathbb E\left[\sup_{x\in \mathcal X}\{F(x)^T(\bar x_K-x)\}\right]\leq \mathcal O\left(\log(K+1)/\sqrt{K+2}\right)\\
    & \mathbb E\left[J^{-1} \mathbf{1}^T[f(\bar x_K)]_+\right]\leq\mathcal O\left(\log(K+1)/\sqrt{K+2}\right).
\end{align*} 
\end{theorem}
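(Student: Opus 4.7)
The plan is to sum the one-step inequality from Proposition \ref{prop:main_rec_ineq} with weights $t_k$, telescope the distance terms, and then invoke the dual-gap decomposition of Lemma \ref{lem main} to extract the two claimed bounds from the resulting averaged inequality. The choice of step sizes is critical: one checks directly that $t_k/\gamma_k = 1/\gamma$, $t_k/\rho_k = 1/\rho$, and $t_k/\bar\tau_k = 1$ for every $k\geq 0$, which makes the four distance differences on the right-hand side of \eqref{prop1} telescope cleanly, e.g.
\begin{align*}
\sum_{k=0}^K \tfrac{t_k}{2\gamma_k}\bigl(\|x_k-x\|^2-\|x_{k+1}-x\|^2\bigr)\leq \tfrac{\|x_0-x\|^2}{2\gamma},
\end{align*}
and similarly for the $v_k$, $\lambda_k$, and $\bar v_k$ telescopes. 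This yields pathwise
$\sum_{k=0}^K t_k\bigl[(x_k-x)^TF(x)+J^{-1}f(x_k)^T\lambda-\Phi_{\rho_k}(x,\lambda_k)\bigr]\leq G_K(x,\lambda) + H_K$, where $G_K(x,\lambda)$ collects the four initial-distance terms and $H_K$ collects all noise and variance residuals, both depending on the iterates but not on $x$ or $\lambda$.

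Dividing by $S_K\triangleq \sum_{k=0}^K t_k$, using convexity of $f$ to get $\lambda^Tf(\bar x_K)\leq \lambda^T(\sum t_kf(x_k))/S_K$ for $\lambda\geq 0$, and restricting to $x\in\mathcal X$, a short case analysis of $\phi_\rho(u,v)$ with $u=f_j(x)\leq 0$ and $v=\lambda_k^{(j)}\geq 0$ shows $\Phi_{\rho_k}(x,\lambda_k)\leq 0$, so $-\Phi_{\rho_k}(x,\lambda_k)\geq 0$ may be dropped from the left-hand side without affecting validity. This produces the pathwise inequality
\begin{align*}
F(x)^T(\bar x_K-x)+J^{-1}\lambda^T f(\bar x_K)\leq \tfrac{G_K(x,\lambda)+H_K}{S_K},\qquad \forall\,x\in\mathcal X,\ \lambda\in\mathbb R_+^J,
\end{align*}
which reproduces the hypothesis of Lemma \ref{lem main} uniformly in $(x,\lambda)$. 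For the dual gap, setting $\lambda=0$ and taking the supremum over the compact set $\mathcal X$ bounds $\sup_{x\in\mathcal X}G_K(x,0)$ by a deterministic constant $C_1=\mathcal O(D_X^2/\gamma+\|\bar v_0\|^2)$. For the infeasibility, setting $x=x^*$ and choosing the random but norm-bounded multiplier $\tilde\lambda_j=1+\lambda_j^*$ if $f_j(\bar x_K)>0$ and $0$ otherwise, Lemma \ref{lem:err_func} applied at $\bar x_K$ together with complementarity of $\lambda^*$ gives $F(x^*)^T(\bar x_K-x^*)+J^{-1}\tilde\lambda^T f(\bar x_K)\geq J^{-1}\mathbf 1^T[f(\bar x_K)]_+$, and $\|\tilde\lambda\|^2\leq 2J+2\|\lambda^*\|^2$ bounds $G_K(x^*,\tilde\lambda)$ by a second deterministic constant $C_2$.

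All that remains is the probabilistic estimate. Taking expectation after the pathwise sup/substitution step, the zero-mean cross terms $t_k(v_k-x_k)^T(w_k+\delta_k)$, $t_k(\bar v_k-\lambda_k)^T\bar\sigma_k$, and $t_k\Delta_k$ (the last vanishing by the explicit computation of $\mathbb E[\|\lambda_{k+1}-\lambda_k\|^2\mid\mathcal F_k]$ already used in the proof of Lemma \ref{prop:lambda_k_squared_bound}) all integrate to zero; the remaining variance terms are controlled via Lemma \ref{lem:prop_stoch_err}, Lemma \ref{lem:bound_product}(a), and the uniform moment bound $\mathbb E[\|\lambda_k\|^2]\leq B$ from Lemma \ref{prop:lambda_k_squared_bound}, yielding $\mathbb E[H_K]\leq C_3\bigl(\sum_{k\geq 0}t_k\gamma_k + \sum_{k\geq 0}t_k\bar\tau_k\bigr)$. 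Both series are summable because $t_k\gamma_k$ and $t_k\bar\tau_k$ are proportional to $1/[(k+1)\log^2(k+1)]$, so $\mathbb E[H_K]=\mathcal O(1)$. Meanwhile $S_K=\sum_{k=0}^K 1/[\sqrt{k+1}\log(k+1)]$ is comparable to $2\sqrt{K+2}/\log(K+2)$ by the standard integral estimate, so $S_K^{-1}=\mathcal O(\log(K+1)/\sqrt{K+2})$ and the claimed rates follow.

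The main obstacle is the interplay of pathwise and in-expectation reasoning: because the dual gap is a supremum over $x$ and the infeasibility uses a data-dependent multiplier $\tilde\lambda(\bar x_K)$, the telescoping, the Jensen step for $f$, and the inequality $\Phi_{\rho_k}(x,\lambda_k)\leq 0$ must all be carried out \emph{pathwise}, with expectation deferred to the very last step; only then can Lemma \ref{prop:lambda_k_squared_bound} be used to convert the second-moment terms in $\mathbb E[H_K]$ into finite constants, and only then does $\|\tilde\lambda\|$ being deterministically bounded rescue the infeasibility argument from the data-dependence of $\tilde\lambda$.
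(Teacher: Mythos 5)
Your proof follows essentially the same route as the paper's: weighted summation of Proposition~\ref{prop:main_rec_ineq} with telescoping under the constant ratios $t_k/\gamma_k$, $t_k/\rho_k$, $t_k/\bar\tau_k$, then Jensen plus Lemma~\ref{lem main} to extract the suboptimality and infeasibility metrics, and finally expectation using the zero-mean noise terms together with the bound $\mathbb{E}[\|\lambda_k\|^2]\leq B$ from Lemma~\ref{prop:lambda_k_squared_bound} and the summability of $t_k\gamma_k$ and $t_k\bar\tau_k$. Your treatment is in fact slightly more explicit than the paper's on two points---deferring expectation until after the supremum/substitution step, and noting that the data-dependent multiplier $\tilde\lambda(\bar x_K)$ is deterministically norm-bounded---but these are refinements of, not departures from, the same argument.
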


\begin{proof}
Multiplying both sides of \eqref{prop1} by $t_k$, using the fact that$\tfrac{t_k}{\rho_k}\geq \tfrac{t_{k+1}}{\rho_{k+1}}$, $\tfrac{t_k}{\gamma_k}\geq \tfrac{t_{k+1}}{\gamma_{k+1}}$, $t_k\geq t_{k+1}$, $\rho_k\geq \rho_{k+1}$, $\tfrac{t_k}{\bar\tau_k}\geq \tfrac{t_{k+1}}{\bar\tau_{k+1}}$, and summing $k=0$ to $K$, we get  
\begin{align}\label{bound in proof} 
\nonumber&\sum_{k=0}^K t_k\left((x_k-x)^TF(x)+{1\over J}\sum_{j=1}^J \lambda^{(j)}f_j(x_k)- \Phi_\rho(x,\lambda_k)\right)\\ \nonumber&
\quad \leq \tfrac{t_0}{2\gamma_0}\|x_0-x\|^2+\tfrac{t_0}{4\gamma_0}\|v_0-x\|^2+\tfrac{t_0}{2\aj{\rho_0}}\|\lambda_0-\lambda\|^2\\\nonumber&
\quad +\overbrace{\tfrac{t_0}{2\bar \tau_0}+\sum_{k=0}^K 2t_k\gamma_kC_F^2+\sum_{k=0}^K 4t_k\gamma_kC_f^2\left(\aj{\rho^2_k}\ze{D_f^2}+\tfrac{1}{J}\|\lambda_k\|^2\right)}^{\footnotesize\mbox {term (a)}}\\&
\qquad +\underbrace{\sum_{k=0}^K t_k \left((v_k-x_k)^T(w_k+\delta_k)+2\gamma_k\|w_k+\delta_k\|^2+ \zal{ (\bar v_k-\lambda_k)^T \bar \sigma_k}+\tfrac{\bar\tau_k\afj{\|\bar \sigma_k\|^2}}{2}+\Delta_k\right)}_{\footnotesize\mbox{term (b)}}.
\end{align}
Let right-hand side of \eqref{bound in proof} denoted by $C(x,\lambda)$. Dividing both sides of the above inequality by $\sum_{k=0}^K t_k$ and invoking the definition of $\bar{x}_k$, we get 
\begin{align*}
(\bar x_K-x)^TF(x)+{1\over J}\sum_{j=1}^J \lambda^{(j)}f_j(\bar x_K)- \Phi_\rho(x,\bar \lambda_k)\leq {1\over \sum_{k=0}^K t_k} C(x,\lambda),
\end{align*}
where in the left-hand side we used Jensen's inequality and the fact that $\Phi_\rho$ is concave with respect to $\lambda$.

Since $\bar x_K\in X$,  from Lemma \ref{lem main} (i) we have $J^{-1} \mathbf{1}^T[f(\bar x_K)]_+ \leq {C(x^*,\tilde \lambda)} $, where $\tilde \lambda$ is defined in Lemma \ref{lem main}. taking expectation on both side and using definition of $C(x,\lambda)$ in \eqref{bound in proof}, Lemmas \ref{lem:prop_stoch_err} \ref{prop:lambda_k_squared_bound}, Assumption \ref{assum:samples} (iv-v), the fact that $\mathbb E[\zal{(\bar v_k-\lambda_k)^T \bar \sigma_k}]=\mathbb E[\Delta_k]=0$ \afj{and $\mathbb E[\|\bar \sigma_k\|^2]\leq {D_f^2}$}, we obtain  

\begin{align}\label{bt1}
\nonumber\mathbb E\left[J^{-1} \mathbf{1}^T[f(\bar x_K)]_+\right]&\leq  {1\over \sum_{k=0}^K t_k} \Big[\tfrac{t_0}{2\gamma_0}\|x_0-x^*\|^2+\tfrac{t_0}{4\gamma_0}\|v_0-x^*\|^2+\tfrac{t_0}{2\aj{\rho_0}}\|\lambda_0-\tilde \lambda\|^2\\ \nonumber 
&+\tfrac{t_0}{2\bar \tau_0}+\sum_{k=0}^K 2t_k\gamma_kC_F^2+\sum_{k=0}^K 4t_k\gamma_kC_f^2\aj{ \rho^2_k}\ze{D_f^2}+ \sum_{k=0}^K 4t_k\gamma_kC_f^2 \tfrac{1}{J}B\\&+\sum_{k=0}^K t_k \left(2\gamma_k(2\nu^2+\ze{4}C_f^2\rho_k^2D_f^2+\ze{\tfrac{4}{J}}C_f^2B)+\tfrac{\bar\tau_kD_f^2}{2}\right)\Big].
\end{align}

Moreover, from Lemma \ref{lem main} (ii) we have $\sup_{x\in \mathcal X}\{F(x)^T(\bar x_K-x)\}\leq {\sup_{x\in \mathcal X}\{C(x,0)\}}$. By taking conditional expectation and then, unconditional expectation on both sides and using the fact that term (a) and term (b) in the definition of $C(x,\lambda)$ do not depend on $x$, we obtain  
\begin{align}\label{bt2}
\nonumber&\mathbb E\left[\sup_{x\in \mathcal X}\{F(x)^T(\bar x_K-x)\}\right]\\
\nonumber&\quad \leq  {1\over \sum_{k=0}^K t_k} \Big[\sup_{x\in \mathcal X}\left\{\tfrac{t_0}{2\gamma_0}\|x_0-x\|^2+\tfrac{t_0}{4\gamma_0}\|v_0-x\|^2+\tfrac{t_0}{2\aj{\rho_0}}\|\lambda_0\|^2\right\}+\tfrac{t_0}{2\bar \tau_0}\\ \nonumber
&\qquad+\sum_{k=0}^K 2t_k\gamma_kC_F^2+\sum_{k=0}^K 4t_k\gamma_kC_f^2\aj{ \rho^2_k}\ze{D_f^2}+ \sum_{k=0}^K 4t_k\gamma_kC_f^2 \tfrac{1}{J}B\\&\qquad+\sum_{k=0}^K t_k \left(2\gamma_k(2\nu^2+\ze{4}C_f^2\rho_k^2D_f^2+\ze{\tfrac{4}{J}}C_f^2B)+\tfrac{\bar\tau_kD_f^2}{2}\right)\Big].
\end{align}
From $\rho_k=\tfrac{\rho}{\sqrt{(k+1)}\log(k+1)}$, $\gamma_k=\tfrac{\gamma}{\sqrt{(k+1)}\log(k+1)}$, $t_k=\bar\tau_k=\tfrac{1}{\sqrt{(k+1)}\log(k+1)}$, and the facts that  $\rho_0=\rho$, $\gamma_0=\gamma$, $t_0=\bar \tau_0=1$, one can show that $\sum_{k=0}^K t_k\gamma_k\leq 3\gamma$ and similarly $\sum_{k=0}^K t_k\bar \tau_k\leq 3$, also $\sum_{k=0}^K t_k\geq \tfrac{1}{\log(K+1)}\int_{1}^{K+1}\tfrac{1}{\sqrt{x+1}}dx=\tfrac{2(\sqrt{K+2}-\sqrt{2})}{\log(K+1)}$. Therefore, we obtain that $\mathbb E\left[J^{-1} \mathbf{1}^T[f(\bar x_K)]_+\right]\leq\mathcal O(\log(K+1)/\sqrt{(K+2)})$ and similarly $\mathbb E\left[\sup_{x\in \mathcal X}\{F(x)^T(\bar x_K-x)\}\right]\leq \mathcal O(\log(K+1)/\sqrt{(K+2)})$.
\end{proof} 
Notably, the rate statements in Theorem~\ref{thm:rates} are in a mean sense, for both the dual gap function and the infeasibility metric. The latter quantifies the violation of the explicit functional constraints. A natural question is whether we can guarantee the convergence of the infeasibility metric to zero in an almost sure sense. This is partially addressed in the following result.

\begin{corollary}
    Consider Theorem~\ref{thm:rates}. There exists a subsequence of $\{\bar{x}_k\}$ along which, the infeasibility metric $\mathbf{1}^T[f(\bar x_K)]_+$ converges to zero almost surely.
\end{corollary}
   
\begin{proof}
From Theorem~\ref{thm:rates}, we have $\lim_{K \to \infty}\mathbb E\left[ \mathbf{1}^T[f(\bar x_K)]_+\right] =0$. Invoking Fatou’s lemma and noting that $  \mathbf{1}^T[f(\bar x_K)]_+ \geq 0$, we obtain 
$$\liminf_{K \to \infty}\  \mathbf{1}^T[f(\bar x_K)]_+  =0 \qquad \hbox{almost surely.}$$
Further, the sequence $\{\bar{x}_k\}$ is bounded, due to the projection onto the compact set $X$ in Algorithm~\ref{alg:primal_dual_SVI}. From the continuity of $f$, it follows that one of the (random) accumulation points of $\{\bar{x}_k\}$ must be a feasible point with respect to the explicit functional constraints almost surely.
\end{proof}

\section{Conclusion}\label{sec:conc}
In this paper, we consider stochastic variational inequality (VI) problems with a monotone mapping and a set that is characterized in terms of explicit functional constraints. Motivated by the absence of convergence rate statements for solving this class of problems, we develop a randomized Lagrangian stochastic approximation method where at each iteration the primal and dual variables are updated recursively. Our main contribution is to show that the existing convergence rates for nonlinearly constrained stochastic optimization problems can be extended to the stochastic VI regime. This is indeed promising and implies that the Lagrangian duality theory can be employed with provable guarantees for several important classes of problems that can be formulated as a stochastic VI. In particular, this work provides convergence speed guarantees for computing a Nash equilibrium in stochastic Nash games where each player may be associated with many hard-to-project constraints. %A natural question that may raise is whether our scheme can be extended to address generalized games where players may have coupling constraints. This is indeed a challenging question and currently, it seems to be left unaddressed in the literature.   

\section{Appendix}\label{sec:app}
\subsection{Proof of Lemma \ref{lem:err_func}}
\begin{proof}
Invoking \zal {Proposition }\ref{assum:kkt} and taking into account that $\mathcal{N}_X(x^*) = \partial \mathcal{I}_X(x^*)$, we have that $x^* \in X$ solves the following augmented variational inequality problem
$
\mbox{VI}\left(X,F+J^{-1}\nabla f^T\lambda^*\right),
$ 
that is parameterized by $J$ and $\lambda^*$. This implies that 
\begin{align}\label{eqn:AVI_ineq}
\left(F(x^*) +J^{-1}\nabla f(x^*)^T\lambda^*\right)^T(x-x^*) \geq 0, \qquad \hbox{for all } x \in X.  
\end{align}
From the convexity of function $f_j$ for all $j \in [J]$ and that $\lambda_j \geq 0$, we have 
\begin{align*}
\fy{\lambda_j^*\left(f_j(x)-f_j(x^*)\right)}\geq  \lambda_j^*\nabla f_j(x^*)^T(x-x^*).
\end{align*}

Summing the preceding relation over $j \in [J]$ and recalling the definition of the mapping $f(x)$, we obtain
\begin{align*}  \left(f(x) -f(x^*)\right)^T\lambda^*  \geq \left(\nabla f(x^*)^T\lambda^*\right)^T(x-x^*).
\end{align*}
Invoking \zal{Proposition } \ref{assum:kkt} (ii) we obtain $f(x)^T\lambda^*   \geq \left(\nabla f(x^*)^T\lambda^*\right)^T(x-x^*)$. From the preceding relation and \eqref{eqn:AVI_ineq} we obtain $F(x^*) ^T(x-x^*) +J^{-1}f(x)^T\lambda^*  \geq 0$ for all $x \in X$. 
%The desired results follow from Definition \ref{def:err_func} and the monotonicity of $F$. 
\end{proof}
\subsection{Proof of Lemma \ref{lem main}}
\begin{proof} (i) Note that $x^*$ is a feasible point to problem \eqref{prob:SVI_nlp} with respect to the set $\mathcal{X}$, i.e., $x^* \in \mathcal{X}$. Also, note that ${\hat \lambda \geq 0}$. From the definition of $\Phi_\rho$, we have that $\Phi_\rho(x^*,{\hat \lambda})\leq 0$. Let $x:=x^*$ in \eqref{ineq:decomp_lemma_assump_ineq}. Then we have
\begin{align}\label{ineq:decomp_proof_ineq1}
{F(x^*) ^T}(\hat x-x^*) +J^{-1}f(\hat x)^T\lambda  \leq {C(x^*,\lambda)}.
\end{align}
Also, from Lemma \ref{lem:err_func} and that $\hat{x} \in X$ we have 
\begin{align*}
{0 \leq  F(x^*) ^T(\hat{x}-x^*) +J^{-1}f(\hat{x})^T\lambda^*.}
\end{align*}
The preceding relation and that $\lambda^* \geq 0$ imply that 
\begin{align*}%\label{ineq:decomp_proof_ineq2}
0 \leq  {F(x^*) ^T}(\hat x-x^*) +J^{-1}[f(\hat x)]_+^T\lambda^*.
\end{align*}
Summing the preceding relation and \eqref{ineq:decomp_proof_ineq1} and rearranging the terms, we obtain 
\begin{align}\label{ineq:decomp_proof_ineq3}
 J^{-1}f(\hat x)^T\lambda-J^{-1}[f(\hat x)]_+^T\lambda^* \leq {C(x^*,\lambda)}.
\end{align}
Let us choose $\lambda_j:=1+\lambda^*_j$ if $f_j(\hat x)>0$, and $\lambda_j:=0$ otherwise for all $j \in [J]$. Then, we obtain the desired relation in (i). 

\noindent (ii) Let $\lambda=0$ in \eqref{ineq:decomp_lemma_assump_ineq} and note that $\Phi_\rho(x,\hat\lambda)\leq 0$ for all $x\in \mathcal X$. We have $F(x)^T(\hat x-x)\leq C(x,0)$ for all $x\in \mathcal X$. Taking supremum from the both sides, we obtain desired results in (ii).
\end{proof}
\subsection{Proof of Lemma \ref{lem:prop_stoch_err}}
\begin{proof}
The relations in part (i) hold as a consequence of Assumption \ref{assum:samples}. To show $\mathbb{E}[\delta_k \mid \mathcal{F}_k]=0$, we can write
\begin{align*}
\mathbb{E}[\delta_k \mid \mathcal{F}_k] &=\mathbb{E}\left[\left[{\rho_k} f_{j_k}(x_k)+\lambda_k^{(j_k)} \right]_+\tilde \nabla f_{j_k}(x_k)-\tfrac{1}{J}\textstyle\sum_{j=1}^J	 \left[{\rho_k} f_{j}(x_k)+\lambda_k^{(j)} \right]_+\tilde \nabla f_{j}(x_k)\mid \mathcal{F}_k\right]\\
& =\tfrac{1}{J}\textstyle\sum_{j=1}^J	 \left[{\rho_k} f_{j}(x_k)+\lambda_k^{(j)} \right]_+\tilde \nabla f_{j}(x_k)-\tfrac{1}{J}\textstyle\sum_{j=1}^J	 \left[{\rho_k} f_{j}(x_k)+\lambda_k^{(j)} \right]_+\tilde \nabla f_{j}(x_k) = 0,
\end{align*}
where the last inequality is implied from the assumption that $j_k$ is uniformly drawn from the set $[J]$. Next, we derive the bound on $\mathbb{E}[\|\delta_k\|^2\mid \mathcal{F}_k]$. We have
\begin{align*}
\mathbb{E}[\|\delta_k\|^2\mid \mathcal{F}_k]& = \mathbb{E}\left[\left\| \left[{\rho_k} f_{j_k}(x_k)+\lambda_k^{(j_k)} \right]_+\tilde \nabla f_{j_k}(x_k)\right\|^2\mid \mathcal{F}_k\right] + \left\|\tfrac{1}{J}\textstyle\sum_{j=1}^J	 \left[{\rho_k} f_{j}(x_k)+\lambda_k^{(j)} \right]_+\tilde \nabla f_{j}(x_k)\right\|^2\\
&-2\mathbb{E}\left[ \left[{\rho_k} f_{j_\za{k}}(x_k)+\lambda_k^{(j_\za{k})} \right]_+\tilde \nabla f_{j_\za{k}}(x_k)\mid \mathcal{F}_k\right]^T\left(\tfrac{1}{J}\textstyle\sum_{j=1}^J	 \left[{\rho_k} f_{j}(x_k)+\lambda_k^{(j)} \right]_+\tilde \nabla f_{j}(x_k)\right)\\
&= \tfrac{1}{J}\textstyle\sum_{j=1}^J\left\| \left[{\rho_k} f_{j}(x_k)+\lambda_k^{(j)} \right]_+\tilde \nabla f_{j}(x_k)\right\|^2 - \left\|\tfrac{1}{J}\textstyle\sum_{j=1}^J	 \left[{\rho_k} f_{j}(x_k)+\lambda_k^{(j)} \right]_+\tilde \nabla f_{j}(x_k)\right\|^2.
\end{align*}
Dropping the non-negative term in the preceding relation and invoking Remark \ref{rem:bounds}, we obtain 
\begin{align*}
\mathbb{E}[\|\delta_k\|^2\mid \mathcal{F}_k]&\leq 
 \tfrac{1}{J}\textstyle\sum_{j=1}^J\left\| \left[{\rho_k} f_{j}(x_k)+\lambda_k^{(j)} \right]_+\tilde \nabla f_{j}(x_k)\right\|^2 = \tfrac{1}{J}\textstyle\sum_{j=1}^J\left[{\rho_k} f_{j}(x_k)+\lambda_k^{(j)} \right]_+^2\left\| \tilde \nabla f_{j}(x_k)\right\|^2\\
 &\leq \tfrac{C_f^2}{J}\textstyle\sum_{j=1}^J\left({\rho_k} f_{j}(x_k)+\lambda_k^{(j)} \right)^2\leq \tfrac{2C_f^2}{J}\textstyle\sum_{j=1}^J\left(\rho_k^2D_f^2+\left(\lambda_k^{(j)}\right)^2\right) = 2C_f^2\left(\rho_k^2D_f^2+\tfrac{\|\lambda_k\|^2}{J}\right).
\end{align*}
\end{proof}
\subsection{Proof of Lemma \ref{lem:error}}
\begin{proof}
From the update rule of $v_{k+1}$,  \zal{we know}  $\sigma_k={1\over \tau_k}(v_{k+1}-v_k)$, hence we have that 
\begin{align*}
\zal{ \sigma^T_{k}(x-v_k)}&=\zal{ \sigma^T_{k}(x-v_{k+1})}+\zal{ \sigma^T_k (v_{k+1}-v_k)}\\
&\leq {1\over 2\tau_k}\|x-v_k\|^2-{1\over 2\tau_k}\|x-v_{k+1}\|^2-{1\over 2\tau_k}\|v_{k+1}-v_k\|^2+\zal{ \sigma^T_k(v_{k+1}-v_k)}\\
&\leq  {1\over 2\tau_k}\|x-v_k\|^2-{1\over 2\tau_k}\|x-v_{k+1}\|^2+{\tau_k\over 2}\|\sigma_k\|^2.
\end{align*}
\zal{first inequality is obtain from three points inequality.}
\end{proof}

\subsection{Proof of Lemma \ref{lem:bound_phi}}
\ze{\begin{proof}
From the fact that $\lambda_{k+1}-\lambda_k=J\rho_k e_{j_k}\odot \nabla_{\lambda} \Phi_{\rho_k}(x_k,\lambda_k)$, one can get the following:
\begin{align*}
\nonumber\tfrac{1}{\rho_k}\zal{ (\lambda_k-\lambda)^T (\lambda_{k+1}-\lambda_k)} &= \zal{ (\lambda_k-\lambda)^T (\nabla_\lambda\Phi_{\rho_k}(x_k,\lambda_k))}\\
&\quad+\zal{ (\lambda_k-\lambda)^T (Je_{j_k}\odot \nabla_{\lambda} \Phi_{\rho_k}(x_k,\lambda_k)-\nabla_\lambda\Phi_{\rho_k}(x_k,\lambda_k))}.
\end{align*}
\zal{also by knowing} that $ \tfrac{1}{\rho_k}\zal{ (\lambda_k-\lambda)^T (\lambda_{k+1}-\lambda_k)} =\tfrac{1}{2\rho_k}\left(\|\lambda_{k+1}-\lambda\|^2-\|\lambda_{k}-\lambda\|^2-\|\lambda_{k+1}-\lambda_k\|^2\right)$ and using previous equality one can obtain:

\begin{align}\label{lambda2}
\nonumber\tfrac{1}{2\rho_k}\|\lambda_{k+1}-\lambda\|^2 &= 
\tfrac{1}{2\rho_k}\|\lambda_{k}-\lambda\|^2+\tfrac{1}{2\rho_k}\|\lambda_{k+1}-\lambda_k\|^2+\zal{ (\lambda_k-\lambda)^T (\nabla_\lambda\Phi_{\rho_k}(x_k,\lambda_k))}\\
&\quad +\zal{ (\lambda_k-\lambda)^T (Je_{j_k}\odot \nabla_{\lambda} \Phi_{\rho_k}(x_k,\lambda_k)-\nabla_\lambda\Phi_{\rho_k}(x_k,\lambda_k))}
\end{align}
Using \eqref{lambda2}, one can easily show that: 
\begin{align}\label{LL}
&\nonumber-\Phi_{\rho_k}(x_k,\lambda_k)+{1\over J}\sum_{j=1}^J \lambda^{(j)}f_j(x_k)+\tfrac{1}{2\rho_k}\|\lambda_{k+1}-\lambda\|^2 \\
\nonumber&=-\Phi_{\rho_k}(x_k,\lambda_k)+{1\over J}\sum_{j=1}^J \lambda^{(j)}f_j(x_k)+ 
\tfrac{1}{2\rho_k}\|\lambda_{k}-\lambda\|^2+\tfrac{1}{2\rho_k}\|\lambda_{k+1}-\lambda_k\|^2+\zal{ (\lambda_k-\lambda)^T (\nabla_\lambda\Phi_{\rho_k}(x_k,\lambda_k))}\\
&\quad +\zal{ (\lambda_k-\lambda)^T (Je_{j_k}\odot \nabla_{\lambda} \Phi_{\rho_k}(x_k,\lambda_k)-\nabla_\lambda\Phi_{\rho_k}(x_k,\lambda_k))}.
\end{align}\\
From definition of $\Phi_{\rho_k}(x_k,\lambda_k)$, $J^+_k$ and $J_k^-$ we have  :
\begin{align}\label{def phi}
\Phi_{\rho_k}(x_k,\lambda_k)=\tfrac{1}{J}\Big[\sum_{j\in J_k^+}(\tfrac{\rho_k}{2}(f_j(x_k))^2+\lambda_k^{(j)}f_j(x_k))-\sum_{j\in J_k^-}\tfrac{(\lambda_k^{(j)})^2}{2\rho_k}\Big]. 
\end{align}
Using \eqref{LL}, \eqref{def phi} and the fact that $\nabla_{\lambda} \Phi_\rho(x,\lambda)=\tfrac{1}{J}\Big[\max(\tfrac{-\lambda^{(j)}}{\rho},f_j(x))\Big]_{j=1}^{J}$, the following holds:
\begin{align}\label{lemma5 proof}
&\nonumber-\Phi_{\rho_k}(x_k,\lambda_k)+{1\over J}\sum_{j=1}^J \lambda^{(j)}f_j(x_k)+\tfrac{1}{2\rho_k}\|\lambda_{k+1}-\lambda\|^2 \\
\nonumber&=-\tfrac{1}{J}\sum_{j\in J_k^+}\tfrac{\rho_k}{2}(f_j(x_k))^2+\tfrac{1}{J}\sum_{j\in J_k^-}\Big[\tfrac{(\lambda_k^{(j)})^2}{2\rho_k}+\lambda^{(j)}f_j(x_k)+(\lambda_k^{(j)}-\lambda^{(j)})(\tfrac{-\lambda_k^{(j)}}{\rho_k})\Big]\\
\nonumber&\quad+\tfrac{1}{2\rho_k}\|\lambda_{k}-\lambda\|^2+\tfrac{1}{2\rho_k}\|\lambda_{k+1}-\lambda_k\|^2+\zal{ (\lambda_k-\lambda)^T (\nabla_\lambda\Phi_{\rho_k}(x_k,\lambda_k))}\\
&\nonumber\quad+\zal{ (\lambda_k-\lambda)^T (Je_{j_k}\odot \nabla_{\lambda} \Phi_{\rho_k}(x_k,\lambda_k)-\nabla_\lambda\Phi_{\rho_k}(x_k,\lambda_k))}\\
&= -\tfrac{1}{J}\sum_{j\in J_k^+}\tfrac{\rho_k}{2}(f_j(x_k))^2-\tfrac{1}{J}\sum_{j\in J_k^-}(\tfrac{(\lambda_k^{(j)})^2}{2\rho_k}-\lambda^{(j)}(f_j(x_k)+\tfrac{\lambda_k^{(j)}}{\rho_k}))\\
\nonumber&\quad+\tfrac{1}{2\rho_k}\|\lambda_{k}-\lambda\|^2+\tfrac{1}{2\rho_k}\|\lambda_{k+1}-\lambda_k\|^2+\zal{ (\lambda_k-\lambda)^T (Je_{j_k}\odot \nabla_{\lambda} \Phi_{\rho_k}(x_k,\lambda_k)-\nabla_\lambda\Phi_{\rho_k}(x_k,\lambda_k))}.
\end{align}
Note that $\lambda\geq 0$ and by definition $J_k^-$ it holds that $\lambda^{(j)}(f_j(x_k)+\tfrac{\lambda_k^{(j)}}{\rho_k})\leq 0$, so we conclude that
\begin{align}\label{bound lemma5}
&-\tfrac{1}{J}\sum_{j\in J_k^+}\tfrac{\rho_k}{2}(f_j(x_k))^2-\tfrac{1}{J}\sum_{j\in J_k^-}(\tfrac{(\lambda_k^{(j)})^2}{2\rho_k}-\lambda^{(j)}(f_j(x_k)+\tfrac{\lambda_k^{(j)}}{\rho_k}))\leq-\tfrac{1}{J}\sum_{j\in J_k^+}\tfrac{\rho_k}{2}(f_j(x_k))^2-\tfrac{1}{J}\sum_{j\in J_k^-}\tfrac{(\lambda_k^{(j)})^2}{2\rho_k}.
\end{align}
Hence we have the desired result by putting \eqref{bound lemma5} in \eqref{lemma5 proof}.
\end{proof}}

\subsection{Proof of lemma \ref{lem:bound_product}}
\begin{proof}
(a) From definition of $\nabla_\lambda\Phi_\aj{\rho_k}$, using Assumption \ref{assum:problem} (ii) and the fact that $\lambda_k^{(j_k)}\geq 0$ for all $k$ and $j$, we have that $\|Je_{j_k}\odot \nabla_{\lambda} \Phi_\aj{\rho_k}(x_k,\lambda_k)\|^2=\left|\max \left(\tfrac{-\lambda_k^{(j_k)}}{\aj{\rho_k}},f_{j_k}(x_k)\right)\right|^2\leq D_f^2.$\\

\noindent (b) \zal{By} definition of  $\bar\sigma_k$ and $\bar v_k$ and using Lemma \ref{lem:error}, one can obtain the following. 
\begin{align*}
&\zal{ (\lambda-\lambda_k\pm \bar v_k)^T (\nabla_\lambda\Phi_\aj{\rho_k}(x_k,\lambda_k)-Je_{j_k}\odot \nabla_{\lambda} \Phi_\aj{\rho_k}(x_k,\lambda_k))}\\
&\quad = \zal{ (\bar v_k-\lambda_k)^T \bar \sigma_k}+\zal{ (\lambda -\bar v_k)^T \bar \sigma_k} \leq \zal{(\bar v_k-\lambda_k)^T \bar \sigma_k} +{1\over 2\bar\tau_k}\|\lambda-\bar v_k\|^2-{1\over 2\bar\tau_k}\|\lambda-\bar v_{k+1}\|+{\bar \tau_k\over 2}\|\bar \sigma_k\|^2.
\end{align*}
%Using part (a), i.e., ${\bar \tau_k\over 2}\|\bar \sigma_k\|^2\leq {\bar \tau_k D_f^2\over 2}$, we get the desired result. 
\end{proof}
\section{Acknowledgments}
 This work is supported in part by the National Science Foundation under CAREER Grant ECCS-1944500 and Grant ECCS-2231863, the
Office of Naval Research under Grant N00014-22-1-2757, the University of Arizona Research, Innovation \& Impact (RII) Funding, and the Arizona Technology and Research Initiative Fund (TRIF) for Innovative Technologies for the Fourth Industrial Revolution initiatives.
\bibliographystyle{siam}
\bibliography{biblio,ref_pairIG_v01_fy}

\end{document}